\title[Relationship of CG and CG-Tikhonov]{Hybrid CG-Tikhonov is a filtration of the CG Lanczos vectors}
\author[D. Gerth]{Daniel Gerth}
\address[D. Gerth]{Technische Universität Chemnitz, (formerly; now in private sector)}
\email{\detokenize{dgerth123@proton.me (private)}}
\author[K. M. Soodhalter]{Kirk M. Soodhalter}
\address[K. M. Soodhalter]{Trinity College Dublin, College Green, Dublin 2 Ireland}
\email{\detokenize{ksoodha@maths.tcd.ie}}
\urladdr{https://maths.soodhalter.com}
\thanks{The first author was kindly supported through DFG Grant number GE 3171/1-1.}
\date{\today}
\dedicatory{}
\begin{document}

% Insert the abstract.
\begin{abstract}
We consider iterative methods for solving linear ill-posed problems with compact operator and right-hand side only available via
noise-polluted measurements. Conjugate gradients (\cg) applied to the normal equations with an appropriate stopping rule and \cg applied
to the system solving for a Tikhonov-regularized solution (\cgt) $(A^\ast A + c I_{\mathcal{X}}) x^{(\delta,c)} = A^\ast y^\delta$
are closely related regularization methods that build iterates from the same Krylov subspaces.

In this work, we show that the \cgt iterate can be expressed 
\linebreak
as
\begin{math}
x^{(\delta,c)}_m 
=
\sum_{i=1}^{m} \gamma^{(m)}_i(c) z_i^{(m)}v_i,
\end{math}
where $\braces{\gamma_i^{(m)}(c)}_{i=1}^m$ are functions of the Tikhonov parameter $c$ and $x^{(\delta)}_m = \sum_{i=1}^{m}
z_i^{(m)}v_i$ is the $m$-th \cg iterate. We call these functions \emph{Lanczos filters}, and they can be shown to have decay
properties as $c\rightarrow\infty$ with the speed of decay increasing with $i$. This has the effect of filtering out the
contribution of the later terms of the \cg iterate. The filters can be constructed using quantities defined via recursions at each 
iteration.

We demonstrate with numerical experiments that good parameter choices correspond to appropriate damping of the Lanczos vectors.
The filtration approach also provides a platform for further development of parameter choice rules, and similar representations
may hold for other hybrid iterative schemes.  

\end{abstract}

% Insert the keywords.
\keywords{Krylov subspace methods, Conjugate Gradients, Ill-posed problems, Regularization methods}

% Insert the Mathematics Subject Classification.
\makeatletter
\ltx@ifpackageloaded{hyperref}{%
\subjclass[2010]{\href{https://mathscinet.ams.org/msc/msc2020.html?t=65F22}{65F22}, \href{https://mathscinet.ams.org/msc/msc2020.html?t=65F10}{65F10}, \href{https://mathscinet.ams.org/msc/msc2020.html?t=65F20}{65F20}, \href{https://mathscinet.ams.org/msc/msc2020.html?t=45Q05}{45Q05}}
}{%
\subjclass[2010]{65F22, 65F10, 65F20, 45Q05}
}
\makeatother

% Typeset the opening page.
\maketitle

% Insert the document body.
\section{Introduction}
\label{section:introduction}
In this paper, we consider the linear operator approximation problem
\begin{align}
	Ax
	\approx 
	y,
	\label{eqn.AxAppy}
\end{align}
where $A$ is a compact operator between two real, separable Hilbert spaces $\cX$
and $\cY$. 
We solve the normal equations 
\begin{align}
    	A^{\ast}Ax^\dagger
	=
	A^{\ast}y
	\label{eqn.ne}
\end{align}  
to obtain the minimum norm pseudoinverse solution.
In the case in which we only possess 
	$
	y^\delta
	=
	y + \delta n
	$ 
	due to measurement error, 
where $n\in\cY$ is a unit norm error element and $\delta > 0$ is the measurement error level
(sometimes called the noise level), it is not recommended to apply the pseudoinverse.
As the problem is ill-posed, computing an approximation 
	$
	x^\delta
	=
	\left(  A^{\ast}A \right)^{-1}A^\ast y^\delta
	$ 
	to $x^\dagger$ 
in the presence of measurement error in the right-hand side can lead to
unbounded errors in the approximate solution. Rather, one seeks to compute a
stable approximation using a regularization method. We study two well-known
methods of obtaining a regularized solution: the method of conjugate gradients
(\cg) applied to the normal equations (\cgne) equipped with an appropriate
stopping criterion and \cg-based Tikhonov regularization (\cgt) equipped with
an appropriate parameter selection and criterion for determining an appropriate
number of iterations. We note to the reader that in the setting of this paper,
we consider \cg and \cgt for \emph{fixed parameter choices}, focusing on
understanding the relationships of these methods.  However, we do relate our
work to parameter choice rules when discussing future work in
\Cref{section.conclusions}.  

Both methods can be derived using the self-adjoint Lanczos iteration and their iterates
can thus be related using some linear algebra applied to the matrices which arise from the Lanczos iteration. The goal of this
work is to relate the behavior of these two methods in the context of their properties as regularizers.  For much of this paper,
we consider these methods applied in the infinite dimensional setting of \eqref{eqn.AxAppy}.  Thus, to further our aims, we extend
results of the recent paper \cite{AlqahtaniRamlauReichel:2022:1} to our setting.

Our analysis highlights an alternative approach to understanding the effect of applying iterative regularization techniques, via
\emph{the filtration of the basis vectors generated by the method rather than of the singular vectors of $A$}.  Although we focus on
relating \cgne and \cgt, we propose that this view will have utility for other iterative and hybrid regularization techniques. 

\subsection{Organization and notation of this paper}\label{subsection:organization} This paper is organized as follows.  In
\Cref{section:regularization-bg}, we present an overview of relevant regularization theory, classical filter functions, and
Tikhonov regularization.   In \Cref{section:Lanczos-CG-CGT}, we give a brief derivation of conjugate gradients, focusing on its
application to the normal equations and for Tikhonov regularization.  In \Cref{section:tri-diag-inverse}, we discuss the structure
of inverses of tridiagonal matrices in order to relate \cg applied to the normal equations and to Tikhonov functional minimization.
In \Cref{section:GKB-Lanczos-inf-dim}, we discuss the practical implementation of \cg using Golub-Kahan bidiagonalization, and we
discuss some relevant properties of this iteration in infinite dimensions.  In \Cref{section:filtration-lanczos-vectors}, we
leverage the analysis we develop to present the concept of Lanczos vector filtration. In \Cref{section.numerical-examples}, we
present one additional numerical example using the \chebfun version \cite{AlqahtaniMachReichel:2022:1} of the \texttt{gravity} example from \cite{Hansen:2007:1}. In \Cref{section.conclusions}, we conclude by suggesting a number of
avenues of research that can be built on the foundations of this work.

When not otherwise indicated, capital letters are used to refer to operators or matrices. Lower-case letters are used to refer to
elements in a vector space. For clarity, we differentiate between abstract operator and vector quantities and those that are
represented in a basis.  Quantities (both infinite and finite dimensional) that are represented in a specific basis (such as a
matrix or a vector expressed as a list of components) are denoted in \textbf{boldface}.  We illustrate some concepts in the text
by computing some steps of \cgne and \cgt on a continuous version of the \texttt{shaw} problem from the Regularization Toolbox
\cite{Hansen:2007:1}.  The continuous version of this problem was implemented using \chebfun
\cite{DriscollHaleTrefethen:2014:1,TownsendTrefethen:2013:1} by the authors of \cite{AlqahtaniMachReichel:2022:1}.  They point out
that the \chebfun allows one to implement continuous regularization techniques, with approximation accuracy up to near
machine precision. They provide a suite of functions\footnote{\url{https://github.com/thomasmach/Ill-posed_Problems_with_Chebfun}} 
implementing these problems, and we build our experiments on these
functions.  For the illustrative experiments in the text, we call $\mathtt{[A,x,btrue] = shaw\_chebfun()}$.  This generates
\chebfun approximation of the Fredholm integral equation of the first kind
\begin{align*}
	\int_{-\pi/2}^{\pi/2}k(s,t)f(t)dt 
	= 
	g(s)
	\qquad
	\mbox{where}&
	\qquad
	k(s,t) 
	=
	\prn
	{
		\cos(s)+\cos(t)
	}
	\prn
	{
		\dfrac{
			\sin(u)
		}
		{
			u
		}
	}^2,
	\\
	\mbox{and}&
	\qquad
	u=\pi (\sin(s)+\sin(t))
\end{align*}
posed on the symmetric interval $\paren[auto]{[}{]}{-\frac{\pi}{2},\frac{\pi}{2}}$.  
In our demonstrations, we use a relative noise level of $10^{-4}$. 
In \Cref{fig:shaw-example}, we
plot the data generated for this example problem.
\begin{figure}
	\includegraphics[scale=0.3]{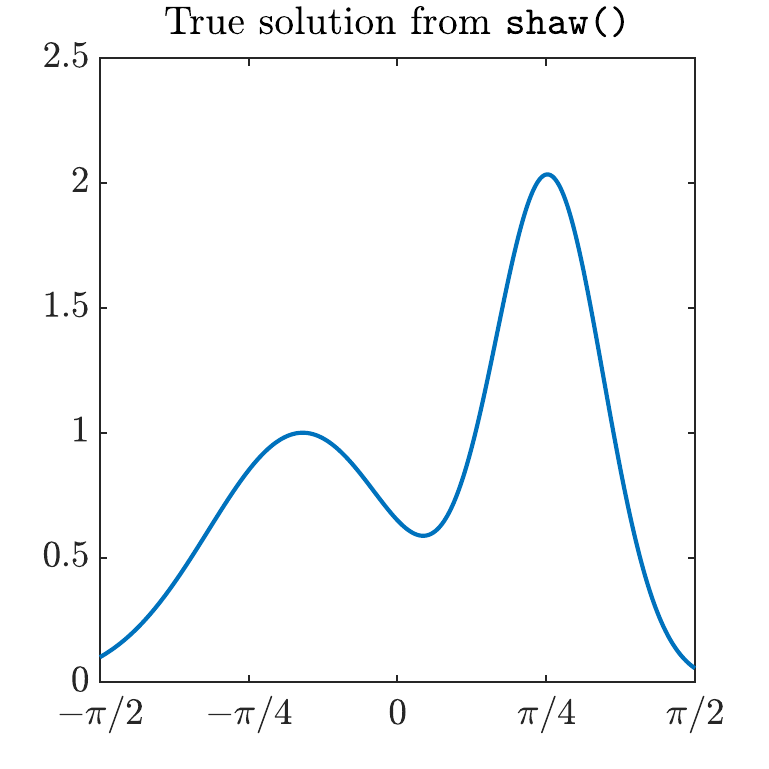}
	\includegraphics[scale=0.3]{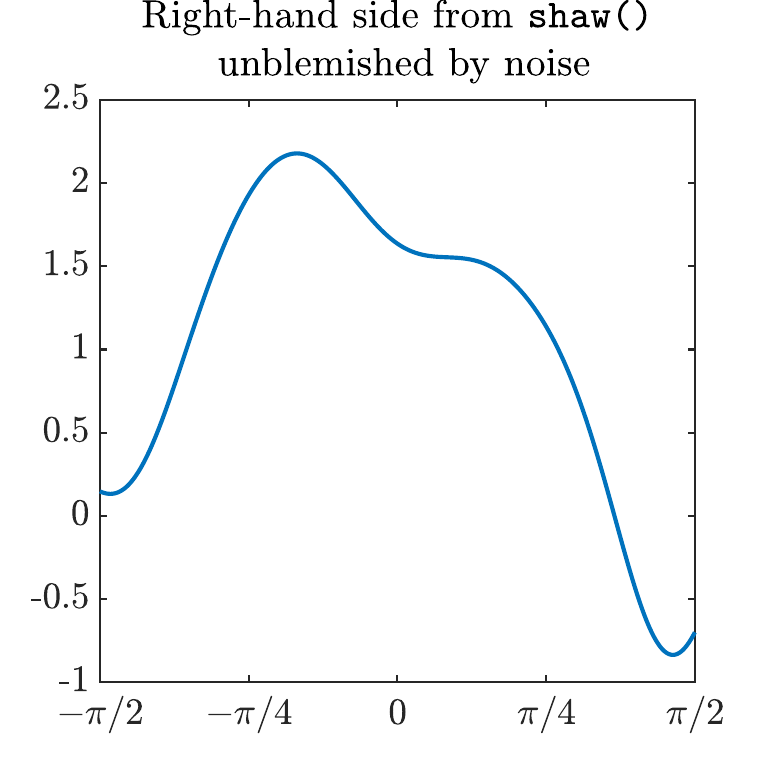}
	\includegraphics[scale=0.3]{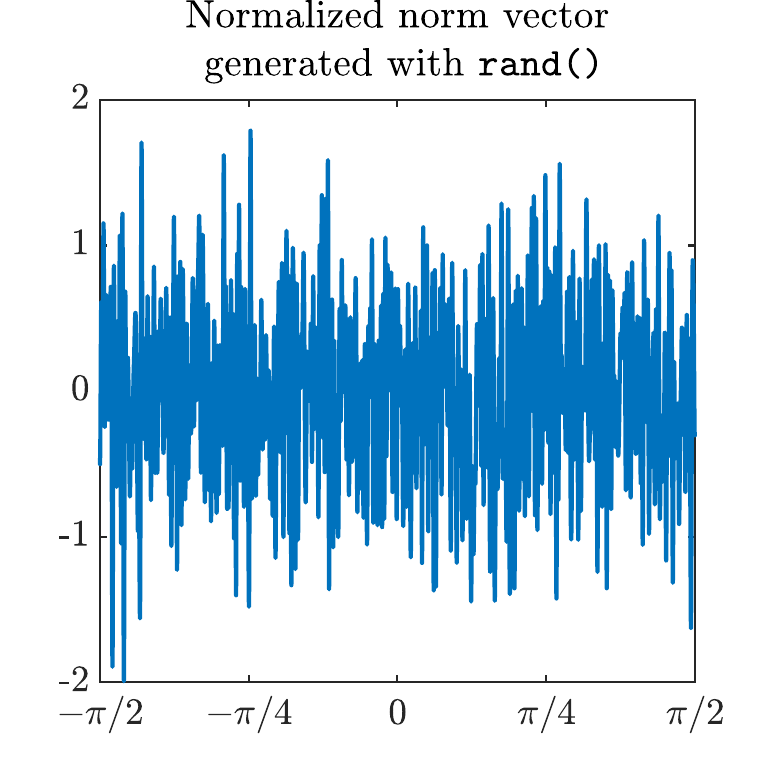}
	\caption{Solution, right-hand side, and noise vectors for $\mathtt{shaw\_chebfun()}$\label{fig:shaw-example}}
\end{figure}
Additional experiments are presented in \Cref{section.numerical-examples}.

\section{Regularization and filter functions}
\label{section:regularization-bg}
The theory of regularization has a rich history beyond the scope of
this paper.  We point the reader to much deeper texts on the topic, such as
\cite{EnglHankeNeubauer:1996:1}.  Conceptually, a regularization technique
seeks to approximate $x^\dagger$ using $y^\delta$ in such a way that some fidelity
to the right-hand side is maintained while the destabilizing effects of $\delta n$
are damped.

Since the operator $A$ is a compact linear map between two Hilbert spaces, 
we know that it has a singular system.  We assume that $A$ is not degenerate;
thus, we can write its singular system $\braces{ \sigma_i, z_i,w_i }_{i=0}^\infty$
where $\braces{ w_i}_{i=0}^\infty\subset \cX$ and 
$\braces{ z_i}_{i=0}^\infty\subset \cY$ are orthonormal
systems in $\cX$ and $\cY$, respectively, and we have $\lim_{i\rightarrow\infty}\sigma_i
	=
	0$.  
We can express the actions of $A$ and its adjoint, respectively, using the \svd with  
\begin{align}\label{eqn.svd-A-mapping}
	A:x\mapsto \sum_{i=0}^\infty \sigma_i \left( x, w_i \right)_\cX z_i\qquad\mbox{and}\qquad A^\ast:y\mapsto\sum_{i=0}^\infty \sigma_i \left( y, z_i \right)_\cY w_i.
\end{align}
It follows that we can also use this system to represent the action of the normal equations operator,
\begin{align}\label{eqn.svd-Ane-mapping}
	A^\ast A: x\mapsto\sum_{i=0}^\infty\sigma_i^2 \left( x, w_i \right)_\cX w_i.
\end{align}
The pseudoinverse solution can be expressed in terms of the singular system, namely
\begin{align}
	x^\dagger
	=
	\sum_{i=0}^\infty \dfrac{\left( y, z_i \right)_\cY}{\sigma_i} w_i.
	\label{eqn:SVD-pseudoinverse-soln}
\end{align}
As we generally assume that the measurement perturbation $n\in\cY$ has nontrivial components
in the entire system of singular vectors, inserting $y^\delta$ into the formula for $x^\delta$ will
result in $\norm[auto]{x^\dagger - x^\delta}_\cX$ generally being unbounded with respect to choice of $n$. 
This necessitates that a regularization method be used.  

We briefly review some basic methods relevant to the work in this paper.
Many of these methods can be expressed in terms of the introduction of a 
\emph{filter factor} $f(\sigma_i)$ into the expression for $x^\delta$ which 
have the desired damping effect on the smaller singular values, with 
\begin{align*}
	x^\dagger\approx \sum_{i=0}^\infty f(\sigma_i)\dfrac{\left( y^\delta, z_i \right)_\cY}{\sigma_i} w_i.
\end{align*}

The most basic of these methods is the truncated singular value decomposition (\tsvd), which can be expressed in terms
of its filter function 
\begin{align*}
	f_k(x) 
	=
	\begin{cases}
		1\qquad & \mbox{if}\ x \geq \sigma_k\\
		0 \qquad & \mbox{if}\ x< \sigma_k
	\end{cases}
\end{align*}
leading to the approximation $x^\delta_{\mathrm{T},k}
	=
	\sum_{i=0}^k \dfrac{\left( y^\delta, z_i
\right)_\cY}{\sigma_i} w_i$.   This method can be quite effective in some circumstances, but it 
requires that we possess enough of the singular system of $A$ to build the \tsvd solution, which is sometimes a strong assumption. 

Instead we avail ourselves of methods that apply the spectral filtering implicitly.  Consider
the standard Tikhonov regularization, which balances fidelity to the data against overfitting to the data
by penalizing the norm of candidate solutions
\begin{align*}
	x_c^\delta
	=
	\argmin_{x\in\cX}\left\lbrace \norm[auto]{y^\delta - Ax}_\cY^2 + c\norm[auto]{x}_\cX^2 \right\rbrace.
\end{align*}
It is known that solving this minimization for parameter
$c>0$ is equivalent to solving
\begin{align}\label{eqn.Tikhonov-system}
    (A^{\ast}A + c I_{\cX})x_c^\delta=A^{\ast}y^\delta,
\end{align}
where $I_{\cX}\in\cL(\cX)$ is the identity operator for $\cX$.
If $\braces{w_i, z_i, \sigma_i}$ is the singular system of $A$,
we can express the Tikhonov solution as 
\begin{align*}
	x_c^\delta
	=
	\sum_{i=0}^\infty \dfrac{\sigma_i}{\sigma_i^2+c}(y^\delta,z_i)_\cY w_i
	=
	\sum_{i=0}^\infty
	\dfrac{\sigma_i^2}{\sigma_i^2+c}\cdot\dfrac{(y^\delta,z_i)_\cY}{\sigma_i}w_i.
\end{align*}
Thus, the Tikhonov filter function is $f_c(x)
	=
	\dfrac{x^2}{x^2+c}$.
Tikhonov can be quite an effective regularization method, but choosing a
parameter $c$ appropriately for a given noisy right-hand side $y^\delta$ can be
challenging. O'Leary, for example, observed in \cite[Section 2]{O’Leary:2001:1}
that on paper with full knowledge of the noise and of the true solution, we can
quantify a theoretical optimal Tikhonov parameter $c_{opt}$ by using a
Newton-Raphson iteration to find the minimizer of the true error functional
$\norm[auto]{x_c^{\delta} - x}_\cX$.  This is not computable in practice, but
we use it for demonstration purposes with the $\mathtt{shaw\_chebfun()}$
example. For more realistic, large-scale problems, minimizing the Tikhonov
functional for different values of $c$ may be quite expensive, as one seeks to
find a value $c$ satisfying some parameter-choice rule, \eg via the discrepancy
principle \cite[Section 5.1]{EnglHankeNeubauer:1996:1}.  Thus, we consider
iterative regularization approaches which also can be combined with Tikhonov
regularization.

\section{Self-adjoint Lanczos, \cg and, \cgt}
\label{section:Lanczos-CG-CGT}
Consider
the \emph{Landweber iteration}, a gradient descent-type iteration, 
\begin{align*}
	x_{m+1}
	=
	x_m + \alpha A^\ast(y^\delta - Ax_m),
\end{align*} 
where for simplicity we assume
$x_0=0$.  It has been shown that 
\begin{align*}
	x_m
	=
	\alpha\sum_{i=0}^{m-1}\left(I_\cX - \alpha A^\ast A\right)^i A^\ast y^\delta;
\end{align*} 
i.e., it selects its approximation from a Krylov subspace, cf. \eqref{eqn.Krylov}.
The Landweber method is often presented with a fixed step size $\alpha$, chosen to guarantee
certain convergence and regularization properties; see, e.g., \cite{EnglHankeNeubauer:1996:1}.  Alternatively, one can choose $\alpha$ to
minimize the $A^\ast A$-norm of the error over all possible gradient steps.
This method is often called the \emph{steepest descent} method.

\subsection{Self-adjoint Lanczos}
\label{subsection:Lanczos}
The \cg method is often derived and implemented as a modified steepest descent method, or as \lsqr when applied to the normal equations; 
see, e.g., \cite[Chapter 2]{Greenbaum:1997:1} and \cite{PaigeSaunders:1982:1}, respectively. 
However, for our purposes, it is
helpful to express it as a Krylov subspace method built on a self-adjoint
Lanczos iteration.  Krylov subspace methods are workhorse methods for the treatment of well- and 
ill-posed problems.  Much work has been done to extend their analysis to the infinite-dimensional,
ill-posed problems setting; see, \eg \cite{AlqahtaniMachReichel:2022:1,CarusoMichelangeliNovati:2019:1,CarusoNovati:2019:1,Novati:2017:1,Novati:2017:2}.

Consider building a basis for the Krylov 
subspace
\begin{align}
	\cK_m (A^{\ast}A,A^{\ast}y)
	=
	\Span
	\braces{A^{\ast}y,A^{\ast}A(A^{\ast}y),\prn{A^{\ast}A}^{2}(A^{\ast}y),\ldots, \prn{A^{\ast}A}^{m-1}(A^{\ast}y)}\subset\cX.
	\label{eqn.Krylov}
\end{align}
The self-adjoint Lanczos iteration is a short-recurrence method for 
generating an orthonormal basis for $\cK_m (A^{\ast}A,A^{\ast}y)$.
This basis is generated iteratively, with the first vector being 
$v_1=A^{\ast}y/\norm[auto]{A^{\ast}y}_{\cX}$.  Thereafter, given that we have
generated $\braces{v_1,v_2,\ldots,v_i}$, an $\cX$-orthonormal 
basis for $\cK_i (A^{\ast}A,A^{\ast}y)$, we generate the next 
basis vector by orthogonalizing $A^{\ast}A v_i$ against 
$\braces{v_1,v_2,\ldots,v_i}$ and then normalizing.  It can be 
shown that $A^{\ast}A$ being self-adjoint implies that $A^{\ast}A v_i$
is naturally orthogonal to all basis vectors except $v_{i-1}$ 
and $v_i$, i.e., we have the three-term recurrence,
\begin{align}
    	A^{\ast}A v_i
	=
	b_{i+1}v_{i+1} + a_i v_i + b_i v_{i-1},
	\label{eqn.three-term}
\end{align}
where $a_i = \inner[auto]{A^\ast A v_i}{v_i}_\cX$ and $b_i =
\inner[auto]{A^\ast A v_i}{v_{i-1}}_\cX$, which can be used to obtain the next
basis vector $v_{i+1}$. We can store these Lanczos vectors as \dquotes{columns}
of $V_m \in\cL(\R^{m},\cX)$, which is the continuous analog to a tall narrow
matrix. The operator $V_m$ acts on $\R^{m}$ via \emph{linear combination},
where for $\by=(\xi_i)_{i=1}^{m}\in\R^{m}$ we have $V_m: \by\mapsto
\sum_{i=1}^{m} \xi_i v_i$.  It follows that $V_m$ acts similarly on compatible
matrices; i.e., it also holds $V_m \in\cL(\R^{m\times k},\cX^k)$ with $V_m$
simply acting on each column of the matrix.  Denoting $b_1 = \norm[auto]{A^\ast
y^\delta}_\cX$, the Lanczos orthogonalization coefficients are stored in a
tri-diagonal matrix $\underline{\bT_m }\in\R^{(m+1)\times m}$ such that
\begin{align}
	V_m 
	=
	\begin{bmatrix} 
		v_1 & v_2 & \cdots & v_m  
	\end{bmatrix}
	\qquad \mand\qquad
	\underline{\bT_m }
	=
	\begin{bmatrix} 
		a_1      & b_2      &        &         & 	 \\
		b_2      & a_2      & b_3    &         &         \\
			 & b_3      & a_3    & \ddots  &         \\
			 &          & \ddots & \ddots  & b_m    \\
			 &          &        & b_m    & a_m    \\
		         &          &        &         & b_{m+1} 
	\end{bmatrix}.
	\label{eqn:Lanczos-basis-matrix}
\end{align}
With these, we can encapsulate the whole self-adjoint Lanczos process 
via the \emph{Lanczos relation}
\begin{align}
	A^{\ast}AV_m 
	=
	V_{m+1} \underline{\bT_m }
	=
	V_m \bT_m  + b_{m+1}v_{m+1}\be_m^T,
	\label{eqn.lanczos-relation}
\end{align}
where $\bT_m \in\R^{m\times m}$ is the square, tri-diagonal matrix formed by
taking the first $m$ rows of $\underline{\bT_m }$, and $\be_m \in\R^{m}$ is the
$m$th Euclidean basis vector.  We note that 
\linebreak
the symmetry of $\bT_m$ implies that $b_{i+1}$ can be obtained when 
\linebreak 
normalizing $A^\ast A v_i - a_i v_i - b_i v_{i-1}$ to obtain $v_{i+1}$ and then
re-used in the subsequent iteration rather than computing
$b_{i+1}=\inner[auto]{A^\ast A v_{i+1}}{v_{i}}_\cX$.

\subsection{Conjugate Gradients via Lanczos}
\label{subsection:CG-Lanczos}
It can be shown that obtaining the $m$th \cg iterate $x_m $ via the 
modified steepest-descent formulation is equivalent to solving the
tri-diagonal linear system
\begin{align}\label{eqn.CG_step}
	\bT_m \by_m 
	=
	b_1 \be_1 ,
\end{align}
where $\be_1 \in\R^{m}$ is the first Euclidean basis vector,
and setting 
	$
	x_m 
	=
	V_m \by_m 
	$.
This is shown, e.g., in \cite[Chapter 6.7]{Saad:2003:1}, to follow from the fact that the $m$th \cg residual satisfies the Galerkin conditions
\begin{align}
	A^\ast y^\delta - A^\ast A x_m 
	\perp_\cX 
	\cK_m(A^\ast A, A^\ast y^\delta).
	\label{eqn:CG-Galerkin}
\end{align}
In \cite[Chapter 6.7]{Saad:2003:1}, it is further laid out how one
transforms the Lanczos-based derivation into the more familiar formulation that uses
modified gradients as search directions.  It is beyond the scope of this paper to 
rehash the full derivation.  The key insight is that $\bT_m$ always admits an 
unpivoted LU-factorization of the form 
	$\bT_m
	=
	\bL_m\bD_m\bL_m^T$, and $P_m=V_m\bL_m$
has as its \dquotes{columns} these modified gradients. The \cgne approach paired with a stopping rule based on the discrepancy
principle (see, e.g., \cite{ChungGazzola:2024:1,Hanke:2017:1}) is an effective manner in which to truncate iterations before the
\cgne solution is corrupted by noise amplification, which was shown in \cite{Hanke:2017:1} to be a formal regularization under some
general assumptions.  We compare the performance of this approach with optimal Tikhonov in \Cref{fig:shaw-CGdiscrep-and-Tikh}.
\begin{figure}
	\includegraphics[scale=0.7]{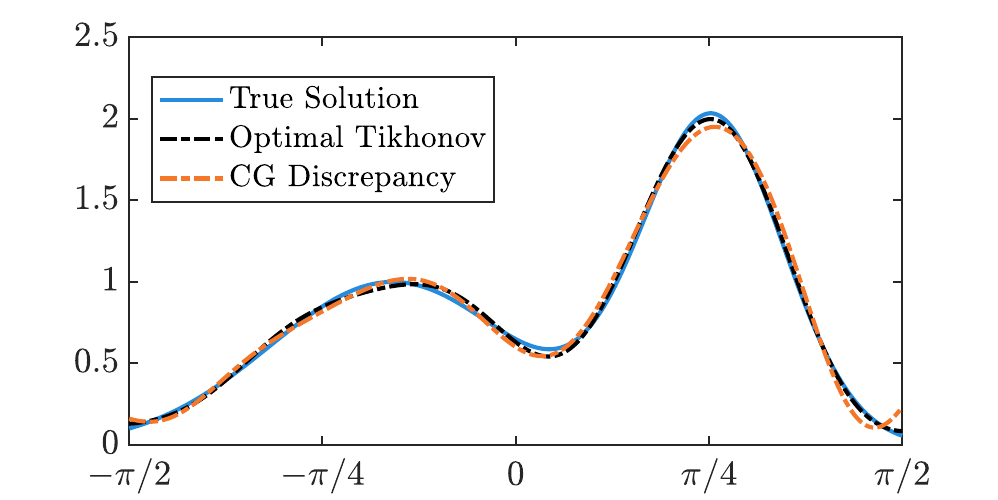}
	\caption{For the $\mathtt{shaw(400)}$ test problem with relative noise-level $10^{-4}$, a comparison of the true solution
	to the regularized solutions produced by Tikhonov with theoretical optimal parameter $c_{opt}$ and \cg stopped using the
	noise-based discrepancy principle.\label{fig:shaw-CGdiscrep-and-Tikh}}
\end{figure}

\subsection{Hybrid \cgt via Lanczos}
\label{subsection:cg-tikhonov}
As $c>0$, the operator in \eqref{eqn.Tikhonov-system} is also self-adjoint and positive-definite
and \eqref{eqn.Tikhonov-system} can thus be solved using \cg.  It has been proven that Krylov
subspaces are identity-shift invariant, i.e., 
\begin{align}
	\cK_m (A^{\ast}A,A^{\ast}y)
	=
	\cK_m (A^{\ast}A + c I_{\cX},A^{\ast}y)
\end{align}
as long as the starting vector remains fixed (or they are scalar 
multiples of one another).  This has been used to derived efficient
\cg-based Tikhonov solvers; see, e.g., \cite{FrommerMaass:1999:1}.  

From \eqref{eqn.lanczos-relation}, one sees
we have the shifted Lanczos relation
\begin{align}
	(A^{\ast}A + c I_{\cX})V_m 
	=
	V_m (\bT_m  + c\bI) + \beta_{m+1}v_{m+1}e_m ^{\ast}.
	\label{eqn.Am-shift-relation}
\end{align}
Thus, applying $j$ iterations of \cg to \eqref{eqn.Tikhonov-system} to
obtain $x_m ^{(c)}$ is equivalent to solving the linear system
\begin{align}\label{eqn.CGT_step}
    (\bT_m  + c\bI)\by_m ^{(c)}
	=
	\norm[auto]{A^{\ast}y}_{\cX}\be_1 ,
\end{align}
and setting $x_m ^{(c)} = V_m \by_m ^{(c)}$. This can be seen from applying
\eqref{eqn:CG-Galerkin} to define $x_m ^{(c)} $, inserting
\eqref{eqn.Am-shift-relation}, and then simplifying. This allows us to relate
the \cg iterate $x_m $ and $x_m ^{(c)} $ using relations depending on $c$.

\begin{remark}
	We note that this approach to \cgt, whereby one runs a Krylov subspace
	iteration and applies the Tikhonov regularization on a projected
	representation of the problem \cite[Section
	5.2]{EnglHankeNeubauer:1996:1} on the subspace is an early example of a
	so-called \emph{hybrid} approach.  See, \eg,
	\cite{ChungNagyOLeary:2007:1, LewisReichel:2009:1,
	KilmerO’Leary.:2001:1} for other examples of hybrid methods as well as
	\cite{Novati:2017:1} for an analysis of hybrid the Arnoldi-Tikhonov
	approach in infinite dimensions.
\end{remark}

One observation made in \cite{FrommerMaass:1999:1} is that although the normal equations and 
Tikhonov problems generate the same shift-invariant Krylov subspace, the
modified gradient search directions do not satisfy any such invariance or a simple
relationship.  One must compute new directions for each choice of parameter $c$.

\subsection{Filter functions of \cg and \cgt}
\label{subsection:filters-CG-CGTikh}
As discussed in \Cref{subsection:CG-Lanczos}, \cg applied to the normal equations produces approximations of the form
$p_m(A^\ast A)A^\ast y^\delta$ where $p_m(z)$ is a polynomial of degree at most $m-1$.  This polynomial has some
interesting properties.  It can be expressed as the polynomial which minimizes the residual norm functional
\begin{align*}
	p_m(z)
	=
	\argmin_{p\in\Pi_{m-1}}\norm[auto]{
		y^\delta 
		- 
		Ap(A^\ast A)A^\ast y^\delta
	}_\cY,
\end{align*}
where $\Pi_i$ is the space of polynomials of degree no greater than $i$.
We can again use \eqref{eqn.svd-A-mapping} and \eqref{eqn.svd-Ane-mapping} to express the \cg iterates in a
way that lets us expose the filter functions of the \cg iterates,
\begin{align*}
	x_m 
	&= 
	p_m(A^\ast A)A^\ast y^\delta
	\\
	&=
	\sum_{i=0}^\infty \sigma_i p_m(\sigma_i^2)(y^\delta, u_i)_\cY v_i
	\\
	&=
	\sum_{i=0}^\infty \sigma_i^2 p_m(\sigma_i^2)\dfrac{(y^\delta, u_i)_\cY}{\sigma_i} v_i.
\end{align*}
Thus the $m$th filter function for \cgne is $f_m(x)
	=
	x p_m(x)$; see, e.g., \cite{HankeNagyPlemmons:1993:1,VanDerSluisVanDerVorst:1990:1}.  We have quite a bit of information
	about this polynomial filter function.  The associated normal equations residual polynomial for $A^\ast(y^\delta - A x_m)$
	is $r_m(x)
	=
	1 - x p_m(x)
	=
	1-f_m(x)$.  It is a well-known result from \cite{GoossensRoose:1999:1} that the roots of the
residual polynomial are the eigenvalues $\braces{\eta_i^{(m)}}_{i=1}^m$ of the tridiagonal matrix $\bT_m$, i.e.,
$r_m(\eta_i^{(m)})=0$ \cite{GoossensRoose:1999:1}.  This implies that $f_m(\eta_i^{(m)})
	=
	1$.  These eigenvalues are called \emph{Ritz values},
and they approximate eigenvalues of $A^\ast A$.  Thus, if $\eta_i^{(m)}$ approximates $\sigma_k^2$ for some $k$, then 
$f_m(\sigma_k^2) \approx 1$, and the associated component of $x^\dagger$ will not be filtered out.  This also
illuminates a mechanism by which too many iterations of \cg will begin to overfit to the noisy $y^\delta$.  As the
Krylov subspace gets larger, the eigenvalues of $\bT_m$ begin to approximate $\sigma_k^2$ for increasingly large
values of $k$, meaning noisy components of $x^\dagger$ begin to not be filtered out.

\subsection{Filter functions of \cgt}
\label{subsection:CGTikh-filter}
We consider applying \cg to the Tikhonov linear system \eqref{eqn.Tikhonov-system}.  Since Krylov subspaces are invariant
with respect to a shift by a scalar multiple of the identity, we can express the \cgt iterate using the same type
of polynomial filter analysis.  The $j$th \cgt iterate can be expressed as $x_m^{(c)}
	=
	p_m(A^\ast A + c
	I_\cX)A^\ast b$.  
Following the same steps as with \cg, we use \eqref{eqn.svd-A-mapping} and \eqref{eqn.svd-Ane-mapping} to
express the iterate as 
\begin{align*}
	x_m^{(c)}
	=
	\sum_{i=0}^\infty \sigma_i^2 p_m^{(c)}(\sigma_i^2 + c)\dfrac{(y^\delta,u_i)_\cY}{\sigma_i}v_i.
\end{align*}
Thus, the \cgt filter function is 
\begin{align}
	f_m^{(c)}(x)
	=
	xp_m^{(c)}(x+c)
	.
	\label{eqn:cgt-filter-function}
\end{align}
From \cite{GoossensRoose:1999:1}, the roots of the \cgt residual polynomial are the eigenvalues 
of the shifted tridiagonal $\bT_m+c\bI$; \ie the shifted Ritz values $\braces{\eta_i^{(m)}+c}_{i=1}^{m}$
are the roots of the \cgt residual polynomial $r_m^{(c)}(t)
	=
	1 - t p_m^{(c)}(t)$. With the variable transform
$t\rightarrow x+c$, we have that the unshifted Ritz values $\braces{\eta_i^{(m)}}_{i=1}^m$
are roots of the shifted residual polynomial; \ie 
\begin{align*}
	r_m^{(c)}(x+c)
	=
	1 - (x+c)p_m^{(c)}\prn{x+c}, 
\end{align*}
	which means that 
\begin{align}
	(\eta_i^{(m)}+c)p_m^{(c)}\prn{\eta_i^{(m)}+c}
	=
	1
	. 
	\label{eqn:shift-resid-poly-root-consequence}
\end{align}
Combining \eqref{eqn:cgt-filter-function} with
\eqref{eqn:shift-resid-poly-root-consequence}, it follows that
$f_m^{(c)}\prn{\eta_i^{(m)}}
	=
	\dfrac{\eta_i^{(m)}}{\eta_i^{(m)} + c}$.
	Thus, if $\eta_i^{(m)}$ approximates $\sigma_k^2$ for some $k$, 
then $f_m^{(c)}(\sigma_i^2)$ will begin to approximate the true Tikhonov filter for that singular value.  As the                                      
Krylov subspace gets larger, the eigenvalues of $\bT_m$ begin to approximate $\sigma_k^2$ for increasingly large                                 
values of $k$, meaning the components associated to well-approximated small singular values will begin to be filtered out.

We observe that expressing the iterates of \cg and \cgt in this manner does not allow us to relate the approximations of the two
methods in a straightforward manner, even though they are both expressed in terms of the filtration of same approximate
eigenvalues in two different ways.  

We demonstrate that we can take advantage of the Krylov subspace shift
invariance to more directly relate the two methods.  To do this, we must
characterize the relationship between inverses of linear shifts of a
symmetric tridiagonal matrix.  We take advantage of well-developed
characterization of tridiagonal matrices and their inverses in the
literature.

\section{On the inverse of a shifted tri-diagonal}
\label{section:tri-diag-inverse}

An explicit representation of the entries of the inverse of a 
tri-diagonal matrix has been calculated.  This was originally done
in \cite{Usmani:1994:1} with a more compact formulation
being shown in \cite{DaFonseca:2007:1}.  We adapt these
to the case of our symmetric tri-diagonal $\bT_m $.
\begin{proposition}[adapted from \cite{DaFonseca:2007:1}]
Denoting by $(\bT_m ^{-1})_{ij}\in\R$ the $(ij)$th entry of  
$\bT_m ^{-1}$ with diagonal entries $a_i$ and super-/sub-diagonal entries $b_i$ defined as in the Lanczos recurrence \eqref{eqn:Lanczos-basis-matrix}, we can write
\begin{align}\label{eqn.Tm-inverse}
    (\bT_m ^{-1})_{ij}
	=
    \dfrac{(-1)^{i+j}}{\theta_m }
    \begin{cases} 
         b_{i+1}\cdots b_m \theta_{i-1}\phi_{m+1}^{(m)}\qquad &\mbox{if}\ i< j\\
        \theta_{i-1}\phi_{m+1}^{(m)}\qquad &\mbox{if}\ i= j\\
         b_{m+1}\cdots b_i \theta_{j-1}\phi_{i+1}^{(m)}\qquad &\mbox{if}\ i> j
    \end{cases},
\end{align}
where $\theta_\ell$ and $\phi_\ell^{(m)}$ satisfy the recurrence relations
\begin{align}\label{eqn.thetas}
    \theta_\ell
	=
	a_\ell\theta_{\ell-1} -  b_\ell^{2}\theta_{\ell-2}\qquad\mbox{for}\ \ell=2,3,\ldots,m
\end{align}
with the initial values $\theta_0 =1$ and $\theta_1 =a_1 $, and
\begin{align}\label{eqn.phis}
    \phi_\ell^{(m)}
	=
	a_\ell\phi_{\ell+1}^{(m)} -  b_{\ell+1}^{2}\phi_{\ell+2}^{(m)}\qquad\mbox{for}\ \ell=m-1,m-2,\ldots, 1
\end{align}
with the initial values $\phi_{m+1}^{(m)}=1$ and $\phi_m ^{(m)}=a_m $.
Furthermore, $\theta_i$ is the determinant of $\bT_i$.
\end{proposition}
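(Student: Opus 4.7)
The plan is to combine the classical cofactor (adjugate) formula for the inverse with two structural facts about $\bT_m$: first, that the quantities $\theta_\ell$ and $\phi_\ell^{(m)}$ are themselves determinants of specific tridiagonal principal submatrices; and second, that after deleting one row and one column from $\bT_m$, the resulting minor can be reduced by repeated cofactor expansion (or by a block-triangular rearrangement) to a product of three determinants of smaller tridiagonal blocks.

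First I would show, by induction on $\ell$, that $\theta_\ell = \det(\bT_\ell)$. Expanding the determinant of the leading principal block $\bT_\ell$ along its last row yields exactly the recurrence \eqref{eqn.thetas}, and the initial values $\theta_0 = 1$, $\theta_1 = a_1$ match the trivial base cases. A symmetric argument, expanding along the first row of the trailing principal submatrix of $\bT_m$ indexed by rows and columns $\ell, \ldots, m$, establishes that $\phi_\ell^{(m)}$ equals the determinant of that trailing block and satisfies \eqref{eqn.phis}. In particular, $\det(\bT_m) = \theta_m$, which supplies the denominator in \eqref{eqn.Tm-inverse}.

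Next I would apply the cofactor formula $(\bT_m^{-1})_{ij} = (-1)^{i+j}\det(\widetilde{\bT}^{(j,i)})/\theta_m$, where $\widetilde{\bT}^{(j,i)}$ is the $(m-1)\times(m-1)$ matrix obtained by deleting row $j$ and column $i$. For the diagonal case $i = j$, the deletion decouples $\bT_m$ into a block-diagonal matrix whose two blocks are the leading principal tridiagonal block of order $i-1$ and the trailing principal block indexed by $i+1, \ldots, m$, so the minor factors cleanly as $\theta_{i-1} \phi_{i+1}^{(m)}$. For the off-diagonal case $i < j$, one repeatedly expands $\det(\widetilde{\bT}^{(j,i)})$ along the columns corresponding to the original columns $i+1, i+2, \ldots, j$: at each step the relevant column contains only one nonzero entry within the ``middle strip'' of rows $i, i+1, \ldots, j-1$, and that entry is precisely one of $b_{i+1}, b_{i+2}, \ldots, b_j$. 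Peeling these off in turn leaves as the remaining minor the block-diagonal matrix whose blocks are the leading tridiagonal block of order $i-1$ (determinant $\theta_{i-1}$) and the trailing tridiagonal block indexed by $j+1, \ldots, m$ (determinant $\phi_{j+1}^{(m)}$). Combined with the overall cofactor sign $(-1)^{i+j}$, this gives the $i < j$ case. The $i > j$ case follows from the symmetry of $\bT_m$, which forces $\bT_m^{-1}$ to be symmetric as well.

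The main obstacle is the bookkeeping in the off-diagonal case. One has to verify that the sequence of cofactor expansions can indeed be carried out in the claimed order, that each expansion genuinely produces a single nonzero pivot equal to the expected $b$-entry, and that the sign contributions of these successive expansions, together with the original cofactor sign, collapse into the single factor $(-1)^{i+j}$. A convenient alternative is to recast the minor as a block-triangular determinant after a suitable permutation of rows and columns, but either route requires some care with indexing to ensure no stray $a_k$ entry is left outside the leading and trailing tridiagonal blocks.
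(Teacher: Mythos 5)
Your argument is essentially the standard proof of the Usmani--da Fonseca formula, and it is worth noting that the paper itself offers no proof of this proposition --- only a citation --- so your write-up supplies a verification the text omits. The identification of $\theta_\ell$ with $\det\bT_\ell$ and of $\phi_\ell^{(m)}$ with the determinant of the trailing block on indices $\ell,\ldots,m$ via last-row and first-row Laplace expansion is correct, as are the cofactor formula and the clean block-diagonal factorization of the $i=j$ minor. For $i<j$, however, the description of ``peeling off one nonzero pivot per column'' is not literally accurate: the column of the minor corresponding to original column $i+1$, restricted to your middle strip of rows $i,\ldots,j-1$, contains up to three nonzeros ($b_{i+1}$, $a_{i+1}$, $b_{i+2}$), so the successive single-pivot expansions do not go through as stated. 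The block route you mention as an alternative is the one to use, and it needs no permutation at all: partitioning the surviving rows as $\{1,\ldots,i-1\}$, $\{i,\ldots,j-1\}$, $\{j+1,\ldots,m\}$ and the surviving columns as $\{1,\ldots,i-1\}$, $\{i+1,\ldots,j\}$, $\{j+1,\ldots,m\}$ exhibits the minor as block lower triangular, with middle diagonal block itself lower triangular with diagonal $b_{i+1},\ldots,b_j$, giving $\det\widetilde{\bT}^{(j,i)}=\theta_{i-1}\,b_{i+1}\cdots b_j\,\phi_{j+1}^{(m)}$ directly. The $i>j$ case by symmetry of $\bT_m^{-1}$ is fine.

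One further point worth flagging: what your proof establishes is $(\bT_m^{-1})_{ij}=(-1)^{i+j}\,b_{i+1}\cdots b_j\,\theta_{i-1}\phi_{j+1}^{(m)}/\theta_m$ for $i<j$, together with $b_{j+1}\cdots b_i\,\theta_{j-1}\phi_{i+1}^{(m)}$ for $i>j$ and $\theta_{i-1}\phi_{i+1}^{(m)}$ for $i=j$, whereas the proposition as printed has $b_{i+1}\cdots b_m$, $b_{m+1}\cdots b_i$ and $\phi_{m+1}^{(m)}$ in the corresponding places. Taken literally the printed formula cannot be correct, since for $i<j$ it would not depend on $j$ at all; the paper's own subsequent uses (the first column of $\bT_m^{-1}$ in the corollary and the $(m,1)$ entry in the residual relation) agree with your version. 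So you have proved the intended statement and, in passing, corrected index typos in the displayed one.
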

It should be noted that there are alternative ways to express this inverse, and these are explored in the book
\cite[Chapter 2.5]{Meurant:2025:1}. 
Using (\ref{eqn.Tm-inverse}--\ref{eqn.phis}), we can develop an expression of $x_m$ in the basis of Lanczos vectors. 
\begin{corollary}
The $m$-th \cgne-iterate admits the following representation in the Lanczos basis, 
\begin{align}
	x_m
	=
	\dfrac{1}{\det \bT_m}
	\sum_{i=1}^m
	(-1)^{i+1}
	b_1\cdots b_i
	\phi_{i+1}^{(m)}
	v_i.
	\label{eq:CGNE-Lanczos-basis}
\end{align}
\end{corollary}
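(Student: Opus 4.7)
The plan is to insert the explicit inverse formula of the proposition into the Lanczos--Galerkin representation of the \cgne iterate. By \eqref{eqn.CG_step}, the $m$-th iterate is $x_m = V_m \by_m$ with $\by_m = b_1 \bT_m^{-1} \be_1$, so its Lanczos coordinates are $(\by_m)_i = b_1\,(\bT_m^{-1})_{i,1}$ for $i = 1, \ldots, m$, and the identity \eqref{eq:CGNE-Lanczos-basis} will follow by writing out $V_m \by_m = \sum_i (\by_m)_i v_i$.

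The substantive step is to specialise \eqref{eqn.Tm-inverse} to $j = 1$. Since $1 \leq i$ for every $i$, only the diagonal case ($i = 1$) and the sub-diagonal case ($i > 1$) contribute. In both branches the leading factor is $\theta_{j-1} = \theta_0 = 1$, the trailing factor is $\phi_{i+1}^{(m)}$, and the product of off-diagonal entries collapses to $b_2 b_3 \cdots b_i$ (interpreted as the empty product $1$ when $i = 1$). This produces
\begin{equation*}
	(\bT_m^{-1})_{i,1} \;=\; \frac{(-1)^{i+1}}{\theta_m}\, b_2 b_3 \cdots b_i\, \phi_{i+1}^{(m)}.
\end{equation*}

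Multiplying through by $b_1$ absorbs one more factor into the $b$-product, giving
\begin{equation*}
	(\by_m)_i \;=\; \frac{(-1)^{i+1}}{\theta_m}\, b_1 b_2 \cdots b_i\, \phi_{i+1}^{(m)}.
\end{equation*}
Using the final clause of the proposition, $\theta_m = \det \bT_m$, and applying $V_m$ to $\by_m$ yields \eqref{eq:CGNE-Lanczos-basis}.

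I do not foresee a real obstacle: the argument is essentially a single substitution of the Usmani--da Fonseca formula. The only subtlety worth verifying explicitly is the boundary case $i = 1$, where the $b$-product degenerates; one must check that the $i = j$ branch of \eqref{eqn.Tm-inverse} recovers $(\bT_m^{-1})_{1,1} = \phi_2^{(m)} / \theta_m$ consistently with the collapsed sub-diagonal expression, so that the single closed form above is valid across the whole range $i = 1, \ldots, m$.
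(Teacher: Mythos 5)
Your proposal is correct and matches the paper's own proof essentially step for step: both write $x_m = b_1 V_m (\bT_m^{-1})_{:,1}$ via \eqref{eqn.CG_step} and then read off the first column of the inverse from \eqref{eqn.Tm-inverse}, with $\theta_0 = 1$ and $\theta_m = \det\bT_m$. Your explicit attention to the $i=1$ boundary case (the empty product $b_2\cdots b_i$) is a minor tidying of the case split the paper leaves implicit, not a different argument.
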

\begin{proof}
It follows from \eqref{eqn.CG_step} that
\begin{align*}
	x_m 
	=& 
	V_m\prn{T_m^{-1}\prn{b_1\be_1}}
	\\
	=&
	b_1 V_m
	\prn{T_m^{-1}}_{:,1}
\end{align*}
where from \eqref{eqn.Tm-inverse} the first column of the inverse can be written as
\begin{align*}
	\prn{T_m^{-1}}_{:,1}
	=&
	\dfrac
	{1}
	{\det T_m}
	\begin{cases}
		\phi_2^{(m)} 
		& 
		i = 1
		\\
		(-1)^{i+1}b_2\cdots b_i\phi_{i+1}^{(m)} 
		&
		i > 1
	\end{cases}.
\end{align*}
\end{proof}
Consider now that \eqref{eqn.Tm-inverse} holds also for the diagonally shifted symmetric tri-diagonal matrix $\bT_m  + c\bI$,
	where diagonal entry $a_\ell$ is replaced with $a_\ell + c$ and the entries $ b_\ell$ remain unchanged.  We can use
	this to characterize the entries of $ \prn{\bT_m +c\bI}^{-1}$ in terms of entries of $\bT_m ^{-1}$.
	Applying \eqref{eqn.Tm-inverse} for $\bT_m  + c\bI$, we denote the auxiliary quantities, defined respectively in
	\eqref{eqn.thetas} and \eqref{eqn.phis}, for the shifted matrix by $\theta_\ell(c)$ and $\phi_\ell^{(m)}(c)$.
\begin{lemma}\label{lemma.Tsig-inverse}
     It holds for all $\ell$ that $ \theta_\ell(c) = \theta_\ell + g_\ell(c)$ and  
     $\phi_i^{(m)}(c) = \phi_i ^{(m)} + h_i ^{(m)}(c)$ where $g_\ell$ and $h_\ell$ satisfy the recurrences
\begin{align}
    	g_\ell(c)
	=
	\paren{(}{)}{a_\ell + c} g_{\ell-1}(c) -  b_\ell^{2}g_{\ell - 2}(c) + c \theta_{\ell-1},
	\label{eqn.g-recur}
\end{align}
where we define $g_0 (c)=0$ and $g_1 (c)=c$, and
\begin{align}
	h_\ell^{(m)}(c)
	= 
	\prn{a_\ell + c} h_{\ell+1}^{(m)}(c) 
	-  
	b_{\ell+1}^{2}h_{\ell+2}^{(m)}(c) 
	+ 
	c \phi_{\ell+1}^{(m)},
	\label{eqn.h-recur}
\end{align}
where we define $h_{m+1}^{(m)}(c)=0$ and $h_m ^{(m)}(c) = c$.
\end{lemma}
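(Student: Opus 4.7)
The plan is to prove both identities by induction on $\ell$, exploiting the fact that the defining recurrences \eqref{eqn.thetas} and \eqref{eqn.phis} for $\theta_\ell(c)$ and $\phi_\ell^{(m)}(c)$ are structurally identical to those for $\theta_\ell$ and $\phi_\ell^{(m)}$, but with each occurrence of $a_\ell$ replaced by $a_\ell + c$ (since the shift $c\bI$ leaves the off-diagonal entries $b_\ell$ unchanged).

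First, I would handle the $\theta$ case. The base cases are immediate: $\theta_0(c) = 1 = \theta_0 + 0$ forces $g_0(c) = 0$, and $\theta_1(c) = a_1 + c = \theta_1 + c$ forces $g_1(c) = c$, matching the claimed initial values. For the inductive step, I would substitute the ansatz $\theta_\ell(c) = \theta_\ell + g_\ell(c)$ into the shifted version of \eqref{eqn.thetas},
\begin{equation*}
\theta_\ell(c) = (a_\ell+c)\theta_{\ell-1}(c) - b_\ell^2 \theta_{\ell-2}(c),
\end{equation*}
expand the right-hand side, and subtract off the unshifted recurrence $\theta_\ell = a_\ell\theta_{\ell-1} - b_\ell^2\theta_{\ell-2}$. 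The surviving terms yield exactly \eqref{eqn.g-recur}, confirming the claimed recursion for $g_\ell(c)$.

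The argument for $\phi_\ell^{(m)}(c) = \phi_\ell^{(m)} + h_\ell^{(m)}(c)$ is entirely analogous, only now the induction runs backwards from $\ell = m+1$ down to $\ell = 1$. The base cases $\phi_{m+1}^{(m)}(c) = 1$ and $\phi_m^{(m)}(c) = a_m + c$ give $h_{m+1}^{(m)}(c) = 0$ and $h_m^{(m)}(c) = c$, respectively. Substituting the ansatz into the shifted version of \eqref{eqn.phis} and subtracting the unshifted recurrence produces \eqref{eqn.h-recur}.

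Honestly, there is no serious obstacle here; the whole lemma is a bookkeeping consequence of linearity of the recurrences in the diagonal entries, plus the observation that the initial conditions of $\theta_\ell$ and $\phi_\ell^{(m)}$ do not themselves depend on $c$. The only place requiring a moment of care is the $\phi$ side, where the recurrence is indexed downward and the inhomogeneous term is $c\phi_{\ell+1}^{(m)}$ rather than $c\phi_\ell^{(m)}$; tracking the index shift correctly in the inductive step is the one spot where a sign or index slip could occur, so I would write that computation out fully rather than invoke symmetry with the $\theta$ case.
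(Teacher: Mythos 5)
Your proposal is correct and follows essentially the same route as the paper's own proof: induction on $\ell$ (forward for $\theta$, backward for $\phi$), substituting the ansatz into the shifted recurrences, expanding, and identifying the leftover terms with \eqref{eqn.g-recur} and \eqref{eqn.h-recur}. Nothing to add.
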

\begin{proof}
    From \eqref{eqn.Tm-inverse}, we already have a representation for the entries of $\prn{\bT_m  + c\bI}^{-1}$. We observe that,
	since the diagonal shift does not affect the off-diagonal entries, the shift $c$ only affects 
	$\theta_\ell(c)$ and $\phi_\ell^{(m)}(c)$.
	Note that these satisfy the same recurrences as in the base case but with different initial conditions, namely
    \begin{align}
        \theta_0(c)
	=
	\theta_0 ;\  \theta_1(c)
	=
	a_1  + c
	=
	\theta_1  + c;\  \phi_{m+1}^{(m)}(c)
	=
	\phi_{m+1}^{(m)};\  \phi_m^{(m)}(c)
	=
	a_m  + c
	=
	\phi_m ^{(m)} + c.
    \end{align}
    If we denote $g_0 (c)=0$, $g_1 =c$, $h_{m+1}^{(m)}(c)=0$,
    and $h_m ^{(m)}(c)=c$, then we have shown the lemma holds
	for the base case of the proposed recurrences, \eqref{eqn.g-recur} and \eqref{eqn.h-recur}, for $g_\ell$ and $h_\ell^{(m)}$.
    We simply need to prove it holds for an induction step.  
    Suppose for $0\leq i \leq \ell-1$, we have 
    $\theta_i(c)
	=
	\theta_i  + g_i (c)$.  Then we know
    from the recurrence in \eqref{eqn.Tm-inverse} that 
    \begin{align}
        \theta_\ell(c)
	=
	\prn{a_\ell+c}\prn{\theta_{\ell-1} + g_{\ell-1}(c)} -  b_\ell^{2}\prn{\theta_{\ell -2} + g_{\ell-2}(c)}.
    \end{align}
    Multiplying this out, we obtain $ \theta_\ell(c) = \theta_\ell + g_\ell(c)$, where $g_\ell(c)$ is indeed the same
	as the recurrence formula stated in the lemma. Similarly, let us assume that for $\ell+1\leq i\leq m+1$, it holds that
	$\phi_i^{(m)}(c) = \phi_i ^{(m)} + h_i ^{(m)}(c)$.  Then we know from the recurrence in \eqref{eqn.Tm-inverse} that
    \begin{align}
        \phi_\ell^{(m)}(c)
	=
	\prn{a_\ell + c}\prn{\phi_{\ell+1}^{(m)} + h_{\ell+1}^{(m)}(c) }-  b_{\ell+1}^{2}\prn{\phi_{\ell+2}^{(m)} + h_{\ell + 2}^{(m)}	(c)}.
    \end{align}
    The proof proceeds similarly as for $\theta_\ell(c)$ in
    that we insert the expressions from the assumption at the 
    induction step and simplify.  This completes the proof.
\end{proof}
\begin{corollary}
	We have the relationship between determinants 
	\begin{align*}
		\det(\bT_m + c\bI) = \theta_m(c) = \theta_m + g_m(c) = \det \bT_m + g_m(c).
	\end{align*}
\end{corollary}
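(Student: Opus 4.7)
The plan is to assemble this corollary directly from two results already in hand: the last sentence of the proposition (which identifies $\theta_i$ with the determinant of the leading principal submatrix $\bT_i$), and Lemma \ref{lemma.Tsig-inverse} (which tracks how $\theta_\ell$ transforms under the diagonal shift).

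First I would apply the proposition's determinantal statement to the unshifted matrix, yielding $\det \bT_m = \theta_m$. Next, observe that the shifted matrix $\bT_m + c\bI$ is itself a symmetric tri-diagonal matrix of the same form, with diagonal entries $a_\ell + c$ and unchanged off-diagonal entries $b_\ell$. Thus the proposition applies verbatim to $\bT_m + c\bI$, and the quantity $\theta_m(c)$ generated by the recurrence \eqref{eqn.thetas} using $a_\ell + c$ in place of $a_\ell$ is precisely $\det(\bT_m + c\bI)$. This establishes the first equality $\det(\bT_m + c\bI) = \theta_m(c)$.

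Then I would specialize Lemma \ref{lemma.Tsig-inverse} to the index $\ell = m$, which gives $\theta_m(c) = \theta_m + g_m(c)$, supplying the middle equality. Substituting $\theta_m = \det \bT_m$ into this yields the final equality $\theta_m + g_m(c) = \det \bT_m + g_m(c)$, completing the chain.

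There is no real obstacle here; the corollary is a bookkeeping consequence of stringing together the determinant identification in the proposition with the additive decomposition provided by the lemma. The only point worth a word of caution in a written proof would be to remark explicitly that the proposition applies to $\bT_m + c\bI$ because the shift preserves symmetric tri-diagonal structure, so that $\theta_m(c)$ may legitimately be interpreted as its determinant.
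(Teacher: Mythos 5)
Your proof is correct and follows exactly the route the paper intends (the paper states this corollary without proof, as an immediate consequence of the proposition's identification $\theta_i = \det \bT_i$ applied to both the unshifted and shifted matrices, chained with Lemma~\ref{lemma.Tsig-inverse} at $\ell = m$). Your explicit remark that the shift preserves the symmetric tri-diagonal structure, so the proposition applies to $\bT_m + c\bI$, is the one point worth writing down, and you have it.
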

Beginning by using \eqref{eq:CGNE-Lanczos-basis} to express the \cgt-iterate in terms of the Lanczos basis, we derive an expression
relating corresponding \cgne and \cgt iterates.
\begin{corollary}
	The $m$-th \cgt-iterate admits the following representation in the Lanczos basis, 
\begin{align}
	x_m^{(c)}
	=&
	\dfrac{1}{\det\prn{\bT_m + c\bI}}
	\sum_{i=1}^m
	(-1)^{i+1}
	b_1\cdots b_i
	\prn{\phi_{i+1}^{(m)} + h_{i+1}^{(m)}(c)}
	v_i
	\nonumber
	\\
	=&
	\frac{\det\bT_m}{\det\prn{\bT_m + c\bI}} x_m +\frac{1}{\det\prn{\bT_m + c\bI}}\prn{\sum_{i=1}^{m}(-1)^{i+1} b_1 \cdots b_i h_{i+1}^{(m)}(c)v_i}
	\label{eq:CGT-Lanczos-basis}
\end{align}

\end{corollary}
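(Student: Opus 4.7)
The plan is to mirror the proof of the preceding corollary for the \cgne-iterate, this time applied to the shifted system \eqref{eqn.CGT_step}. From that equation, one has
$$x_m^{(c)} = V_m (\bT_m + c\bI)^{-1}(b_1 \be_1) = b_1 V_m \bigl((\bT_m+c\bI)^{-1}\bigr)_{:,1},$$
using that $\norm[auto]{A^\ast y^\delta}_\cX = b_1$.

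Next, I would invoke the inverse formula \eqref{eqn.Tm-inverse} applied to $\bT_m + c\bI$. Since the off-diagonal entries are unchanged by the diagonal shift, the factors $b_1 \cdots b_i$ persist, and only the quantities $\theta_m$ and $\phi_{i+1}^{(m)}$ get replaced by their shifted counterparts $\theta_m(c)$ and $\phi_{i+1}^{(m)}(c)$. This yields
$$x_m^{(c)} = \frac{1}{\theta_m(c)} \sum_{i=1}^m (-1)^{i+1} b_1 \cdots b_i\, \phi_{i+1}^{(m)}(c)\, v_i.$$

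I would then apply the immediately preceding corollary giving $\theta_m(c) = \det(\bT_m + c\bI)$ together with Lemma~\ref{lemma.Tsig-inverse}'s decomposition $\phi_{i+1}^{(m)}(c) = \phi_{i+1}^{(m)} + h_{i+1}^{(m)}(c)$, which produces the first line of \eqref{eq:CGT-Lanczos-basis}. To obtain the second line, I would split the sum according to this decomposition and recognize, via \eqref{eq:CGNE-Lanczos-basis}, that
$$\sum_{i=1}^m (-1)^{i+1} b_1 \cdots b_i\, \phi_{i+1}^{(m)}\, v_i = \det(\bT_m)\, x_m,$$
which, inserted in the previous display, produces the stated representation.

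The argument is essentially a direct substitution, so there is no serious analytic obstacle; the main concern is bookkeeping. One must verify that the boundary conventions $h_{m+1}^{(m)}(c) = 0$ and the index ranges align properly with the first-column formula in \eqref{eqn.Tm-inverse}, and in particular that the $i=1$ case, where the product $b_2 \cdots b_i$ is empty, fits cleanly into the summation once the external factor $b_1$ coming from the right-hand side of \eqref{eqn.CGT_step} is absorbed into the $b_1 \cdots b_i$ factors.
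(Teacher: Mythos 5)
Your argument is correct and is exactly the derivation the paper intends: the corollary is stated without an explicit proof, but the surrounding text ("Beginning by using \eqref{eq:CGNE-Lanczos-basis}\dots") together with \Cref{lemma.Tsig-inverse} and the determinant corollary prescribes precisely your route of applying \eqref{eqn.Tm-inverse} to $\bT_m + c\bI$ and splitting the sum via $\phi_{i+1}^{(m)}(c) = \phi_{i+1}^{(m)} + h_{i+1}^{(m)}(c)$. Your bookkeeping concerns (the empty product at $i=1$ and the convention $h_{m+1}^{(m)}(c)=0$) check out, so nothing further is needed.
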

\begin{figure}
	\includegraphics[scale=0.5]{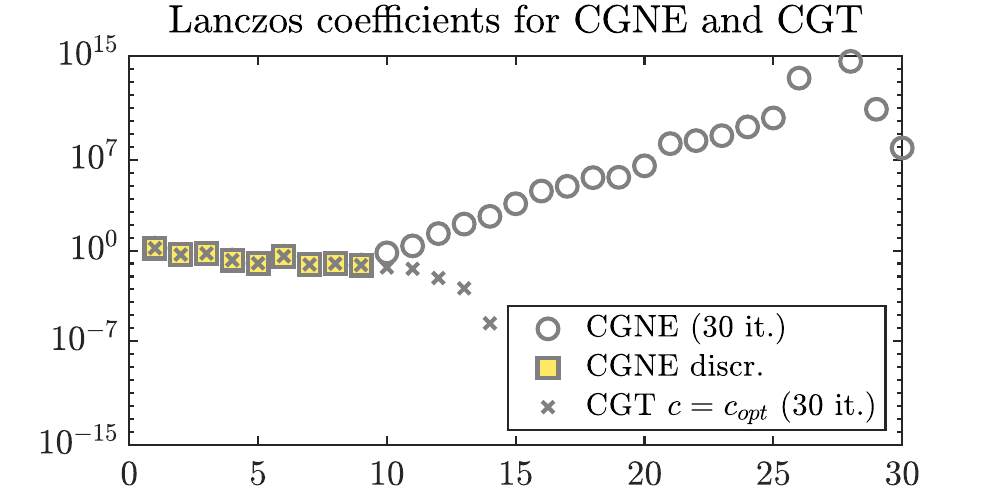}
	\caption{\label{figure:shaw-Lanczos-coeffs} The \emph{absolute values} of the Lanczos vector coefficients of \cgne(30),
	\cgne stopped using the discrepancy principle, and \cgt(30) using $c=c_{opt}$ from \cite{O’Leary:2001:1}. It should be noted
	that the coefficients from discrepancy principle \cg are indeed close to the first seven coefficients of the \cgt(30)
	iterate.}
\end{figure}
Important for understanding the relationship between \cg and \cgt as the regularization parameter varies is the
following characterization of $g_\ell(c)$ and $h_\ell^{(m)}(c)$.
\begin{corollary}\label{corollary.f-g-poly}
Both $g_\ell(c)$ and $h_\ell^{(m)}(c)$ are monic polynomials in $c$.  The function $g_\ell(c)$
	is a monic polynomial of degree $\ell$, and $h_\ell^{(m)}(c)$ is a monic polynomial of degree $m-\ell+1$.
\end{corollary}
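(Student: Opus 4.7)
The proof is a short induction that follows immediately from the recurrences \eqref{eqn.g-recur} and \eqref{eqn.h-recur} once one observes that the \emph{inhomogeneous} driving terms $c\theta_{\ell-1}$ and $c\phi_{\ell+1}^{(m)}$ are polynomials of degree exactly one in $c$ (since $\theta_{\ell-1}$ and $\phi_{\ell+1}^{(m)}$ do not depend on $c$ at all). So the leading order behavior is controlled entirely by the homogeneous three-term recurrence.

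For $g_\ell(c)$, I would induct upward on $\ell$. The base cases are immediate: $g_0(c)=0$ and $g_1(c)=c$ is monic of degree $1$. For the inductive step with $\ell \geq 2$, assume $g_{\ell-1}$ is monic of degree $\ell-1$ and $g_{\ell-2}$ has degree $\ell-2$ (interpreting the zero polynomial as having degree $-\infty$ in the case $\ell=2$). Then in
\begin{equation*}
g_\ell(c) = (a_\ell+c)\,g_{\ell-1}(c) - b_\ell^2 g_{\ell-2}(c) + c\,\theta_{\ell-1},
\end{equation*}
the only term that can contribute to degree $\ell$ is $c\cdot g_{\ell-1}(c)$, which contributes leading monomial $c^\ell$ with coefficient $1$. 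The remaining pieces have degrees at most $\ell-1$, $\ell-2$, and $1$, respectively, so they cannot disturb monicity.

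The argument for $h_\ell^{(m)}(c)$ proceeds identically, only with a downward induction on $\ell$ starting from $\ell=m$. The base cases $h_{m+1}^{(m)}(c)=0$ and $h_m^{(m)}(c)=c$ give the asserted degrees $0$ and $1=m-m+1$. For the step, assume $h_{\ell+1}^{(m)}$ and $h_{\ell+2}^{(m)}$ are monic of degrees $m-\ell$ and $m-\ell-1$, respectively. Then the same reasoning applied to
\begin{equation*}
h_\ell^{(m)}(c) = (a_\ell+c)\,h_{\ell+1}^{(m)}(c) - b_{\ell+1}^2 h_{\ell+2}^{(m)}(c) + c\,\phi_{\ell+1}^{(m)}
\end{equation*}
shows that the top-order contribution comes uniquely from $c\cdot h_{\ell+1}^{(m)}(c)$, giving leading term $c^{m-\ell+1}$ with coefficient $1$.

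There is no real obstacle here; the only thing to be careful about is bookkeeping the two boundary cases ($g_0=0$ for $\ell=2$, and $h_{m+1}^{(m)}=0$ for $\ell=m-1$), so that the induction hypothesis is applied only where the corresponding polynomial is nonzero. The structural point worth emphasizing in the writeup is that the $c$-independence of $\theta_{\ell-1}$ and $\phi_{\ell+1}^{(m)}$ is what makes the inhomogeneity benign, and that each step of the recurrence increases (respectively, for $h$, maintains the appropriate) $c$-degree by exactly one via multiplication by the factor $c$ inside $(a_\ell+c)$.
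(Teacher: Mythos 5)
Your proof is correct and takes essentially the same route as the paper's: an induction on the recurrences of \Cref{lemma.Tsig-inverse}, isolating the leading contribution $c\cdot g_{\ell-1}(c)$ (resp.\ $c\cdot h_{\ell+1}^{(m)}(c)$) and observing that all remaining terms, including the $c$-independent inhomogeneities $c\,\theta_{\ell-1}$ and $c\,\phi_{\ell+1}^{(m)}$, have strictly smaller degree. Your explicit treatment of the zero-polynomial boundary cases is in fact slightly more careful than the paper's own writeup.
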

\begin{proof}
In both cases, we can prove the characterization by induction.  For $g_\ell(c)$, the base cases for $\ell=0$ and
$\ell
	=
	1$ are defined in \Cref{lemma.Tsig-inverse}.  For the general case, we assume that the statement holds for
$\ell-1$ and $\ell-2$.  From \Cref{lemma.Tsig-inverse}, it follows that $g_\ell(c)
	=
	c g_{\ell-1}(c) +
J_{\ell-1}(c)$, where $\deg J_{\ell-}(c)
	=
	\ell-1$. Thus, the statement holds. 

Similarly, for $h_\ell^{(m)}(c)$, the base cases for $\ell=m+1$ and $\ell=m$ are also defined in
\Cref{lemma.Tsig-inverse}.  For the general case, we similarly assume that the statement holds for $\ell+1$ and
$\ell+2$.  From \Cref{lemma.Tsig-inverse}, it follows that $h_\ell^{(m)}(c)
	=
	c h_{\ell+1}^{(m)}(c) +
K_{\ell+1}^{(m)}(c)$ where $\deg K_{\ell+1}^{(m)}(c)
	=
	m-\ell$. 
\end{proof}
\begin{remark}
	As an aside, we note that these polynomials \emph{almost} satisfy a three-term recurrence, but do not because the recurrence
	always involves a shift in the linear term.  There is thus is no inner product for which $\paren[auto]{\{}{\}}{g_\ell}$ or
	$\paren[auto]{\{}{\}}{h_\ell^{(m)}}$ are families of orthonormal polynomials.% \cite{Favard:1935:1,Shohat:1938:1}.
\end{remark}
The following corollary follows from \eqref{eqn.CGT_step},
but we prove it instead using the Lanczos filtration quantities,
which gives a better qualitative sense of the nature of the convergence.
\begin{corollary}
	It follows that $\lim_{c\rightarrow\infty}x_m^{(c)}
	=
	0$.
\end{corollary}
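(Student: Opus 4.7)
The plan is to read off the limit directly from the Lanczos-basis representation \eqref{eq:CGT-Lanczos-basis} of $x_m^{(c)}$ and a degree count using \Cref{corollary.f-g-poly}. No estimate of norms of Lanczos vectors is needed, since there are only $m$ terms: it suffices to argue that each scalar coefficient of $v_i$ tends to zero as $c\to\infty$.

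First I would rewrite
\begin{align*}
	x_m^{(c)}
	=
	\sum_{i=1}^{m} \gamma_i^{(m)}(c)\, v_i,
	\qquad
	\gamma_i^{(m)}(c)
	:=
	\frac{(-1)^{i+1} b_1\cdots b_i\,\bigl(\phi_{i+1}^{(m)} + h_{i+1}^{(m)}(c)\bigr)}{\det(\bT_m + c\bI)}.
\end{align*}
By the corollary to \Cref{lemma.Tsig-inverse}, the denominator equals $\theta_m + g_m(c)$, and by \Cref{corollary.f-g-poly}, $g_m(c)$ is a monic polynomial in $c$ of degree $m$. Hence the denominator is a polynomial in $c$ of exact degree $m$. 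In the numerator, again by \Cref{corollary.f-g-poly}, $h_{i+1}^{(m)}(c)$ is a monic polynomial in $c$ of degree $m-(i+1)+1 = m-i$, so the numerator is a polynomial of degree $m-i$ in $c$.

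Therefore, for each $i\in\{1,\ldots,m\}$, $\gamma_i^{(m)}(c)$ is a rational function whose denominator has degree $m$ and whose numerator has degree $m-i \leq m-1$; in particular
\begin{align*}
	\lim_{c\to\infty} \gamma_i^{(m)}(c)
	=
	0,
\end{align*}
with decay of order $c^{-i}$. Since $x_m^{(c)}$ is a finite linear combination of the fixed elements $v_1,\ldots,v_m\in\cX$ with coefficients $\gamma_i^{(m)}(c)\to 0$, it follows that $\|x_m^{(c)}\|_\cX \to 0$, giving $\lim_{c\to\infty} x_m^{(c)} = 0$ in $\cX$.

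There is no real obstacle; the statement is essentially a corollary of the degree count in \Cref{corollary.f-g-poly} combined with the explicit Lanczos-basis formula. The qualitative content worth emphasizing, which motivates the subsequent terminology of \emph{Lanczos filters}, is that the rate at which the $i$-th coefficient is damped is $c^{-i}$, so that later Lanczos components are filtered out increasingly aggressively as $c$ grows.
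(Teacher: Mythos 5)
Your proof is correct and follows essentially the same route as the paper: both read the limit off the Lanczos-basis representation \eqref{eq:CGT-Lanczos-basis} and use the degree count from \Cref{corollary.f-g-poly} to see that $\deg h_{i+1}^{(m)} < \deg g_m$, so every coefficient of $v_i$ vanishes as $c\to\infty$. The only addition is your explicit $c^{-i}$ decay rate per coefficient, which is a pleasant refinement but not a different argument.
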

\begin{proof}
We prove this by studying the individual terms in \eqref{eq:CGT-Lanczos-basis}.
We have that $\lim_{c\rightarrow\infty}g_m(c)=\infty$.  Since $\theta_m$ does
not depend on $c$, it follows that\linebreak
\begin{math}
	\lim_{c\rightarrow\infty}\dfrac{\theta_m}{\theta_m + g_m(c)}
	=
	0 
	.
\end{math}
Furthermore, we observe that
$\lim_{c\rightarrow\infty}\dfrac{h_\ell^{(m)}(c)}{\theta_m + g_m(c)}=0$, since
$\deg h_\ell^{(m)} < \deg g_m$ for all $\ell$, $2\leq\ell\leq m+1$. This
completes the proof.  \end{proof} This result is not surprising, since
classical Tikhonov regularization induces a smoothing effect on the solution,
and the strength of that smoothing increases with the size of the
regularization parameter.

To further understand and contextualize the relationship of \cgne and \cgt iterates, we transition to developing some additional
analysis of these iterations.  We build on some recent work analyzing these methods in infinite dimensions, but working in the
more common, on-paper equivalent but numerically more stable Golub-Kahan Bidiagonalization-based \lsqr\ approach
\cite{PaigeSaunders:1982:1}.  This work builds on that in \cite{AlqahtaniRamlauReichel:2022:1,CarusoNovati:2019:1}. We note that an analysis of the hybrid Arnoldi-Tikhonov approach has also been 
undertaken in the infinite-dimensional setting \cite{Novati:2017:1}.

\section{Golub-Kahan Bidiagonalization and Lanczos in infinite dimensions}
\label{section:GKB-Lanczos-inf-dim}
\cgne can be equivalently formulated by studying the Golub-Kahan bidiagonalization (\gkb)
method with respect to $A$.  This leads to the method called \lsqr, which is equivalent in exact arithmetic to \cgne but known to be
numerically more stable.  The basic observation leading to the \gkb is that for any $A$, we can build companion orthonormal bases for the Krylov 
subspaces $\cK_m (AA^{\ast},y^\delta)$ and $\cK_m (A^{\ast}A,A^{\ast}y^{\delta})$.  From this we build the short recurrence method called \lsqr.

Consider $U_m  = \begin{bmatrix} u_1  & u_2  & \cdots & u_m \end{bmatrix}\in\cL(\R^m,\cY)$ and $V_m  = \begin{bmatrix} v_1 & v_2 & \cdots & v_m
\end{bmatrix}\in\cL(\R^m,\cX)$
which have orthonormal columns and are defined recursively by
\begin{align}
	u_1  = y^{\delta}/\beta_0 ;\ \ \beta_0  
	&= 
	\norm[auto]{y};\ \ \alpha_1 v_1 = A^{\ast}u_1 
	\nonumber
	\\
	Av_i = \alpha_i u_i  + \beta_{i+1}u_{i+1};
	\ &\ 
	A^{\ast}u_{i+1} = \beta_{i+1}v_i + \alpha_{i+1}v_{i+1}.
	\label{eqn:GK-3-term-recur}
\end{align}
If we define 
\begin{align*}
	\underline{\bB_m}  
	=
	\begin{bmatrix}
		 \alpha_1  &  &  &  \\ 
		 \beta_2  & \alpha_2  &  &  \\ 
		  & \beta_3  & \ddots &  \\ 
		  &  & \ddots & \alpha_m  \\ 
		  &  &  & \beta_{m+1}
		 \end{bmatrix} 	 
		 \in 
		 \R^{(m+1)\times m},
\end{align*}
then the bases satisfy the Golub-Kahan relations
\begin{align}
	AV_m  
	&= 
	U_{m+1}\underline{\bB_m} 
	\qquad\mand\qquad
	A^{\ast}U_m  
	= 
	V_m \bB_m ^{\ast}.	
	 \label{eqn.GKB-relation}
\end{align}
From this it follows that 
\begin{align*}
	A^{\ast}AV_m 
	&=
	A^{\ast}U_{m+1}\underline{\bB_m} 
	\\
	&=
	V_{m+1}\bB_{m+1}^{\ast}\underline{\bB_m} 
	\\
	&=
	V_{m+1}
	\begin{bmatrix}
		\alpha_1^2+\beta_2^2 & \beta_2\alpha_2      & 		           & 		       &
		\\
		\beta_2\alpha_2      & \alpha_2^2+\beta_3^2 & \beta_3\alpha_3      & 		       &
		\\
		                     & \beta_3\alpha_3      & \alpha_3^2+\beta_4^2 & \ddots            &
		\\
		                     &                      & \ddots	           & \ddots            & \beta_m\alpha_m
		\\
				     &                      & 		           & \beta_m\alpha_m   & \alpha_m^2 + \beta_{m+1}^2
		\\
				     &                      & 		           &                   & \alpha_{m+1}\beta_{m+1}
	\end{bmatrix}. 
\end{align*}
Since both methods produce the same orthonormal basis for $\cK_m(A^\ast A, A^\ast y^\delta)$ and the Lanczos decomposition is 
unique, it follows that $\underline{\bT_m}=\bB_{m+1}^{\ast}\underline{\bB_m}$.  This means that $a_i = \alpha_i^2 + \beta_{i+1}^2$,
and $b_i = \beta_i\alpha_i$.
In the \lsqr implementation, computing the same approximation as the \cgne approximation can be formulated as solving the system
\begin{align*}
	\underline{\bB_m} ^{\ast}\underline{\bB_m} \by_m  = \underline{\bB_m} ^{\ast}(\beta_0 \be_1 ),
\end{align*}
and setting $x_m  = V_m \by_m $.
Similarly the \cgt can also be formulated in the \gkb setting as solving
\begin{align*}
	(\underline{\bB_m} ^{\ast}\underline{\bB_m}  + \lambda^{2}\bI)\by_m ^{(\lambda^{2})} = \underline{\bB_m} ^{\ast}(\beta_0 \be_1 ),
\end{align*}
and setting $x_m ^{(\lambda^{2})} = V_m \by_m ^{(\lambda^{2})}$.

\subsection{\gkb in the infinite dimensional setting}
In \cite{AlqahtaniRamlauReichel:2022:1}, the authors analyze the \gkb-based
\cg/\cgt in the infinite-dimensional inverse problems context in order to prove
some results concerning noise-based convergence results.  The \gkb and \lsqr
algorithms are presented using the techniques and language developed in
\cite{CarusoNovati:2019:1}. In this section, we summarize some of the results
proved in that paper.  We further prove some additional results using the same
techniques which are useful when relating \cg and \cgt.  We note that the
authors consider some special cases that we do not necessarily use in our
setting.  Namely, the authors consider a range-restricted
\cite{CalvettiLewisReichel:2000:1,CalvettiLewisReichel:2001:1} version of the
\gkb-algorithm; i.e., they start the \gkb iteration by defining $u_1 = A A^\ast
y^\delta$ rather than simply using $y^\delta$.  Furthermore, the authors
consider the case that the exact operator $A$ relating the solution to $y$ is
not known and that we instead have an approximation $A_h$ thereof. For
simplicity of exposition, we do not consider these in our setting. 
%but much of what prove should also be valid under those assumptions.

The authors of \cite{AlqahtaniRamlauReichel:2022:1} do begin with one assumption that we carry over in this paper.  
\begin{assumption}\label{ass:non-terminating-gkb}
	The \gkb iteration introduced at the beginning of this section can be
	run for infinitely many steps. 
\end{assumption}
This assures that we avoid cases for which the method breaks down, such as when dealing with an integral operator induced by a degenerate
kernel.  This leads to an infinite version of \eqref{eqn.GKB-relation}, 
\begin{align}
	AV  
	&= 
	U\bB 
	\qquad\mand\qquad
	A^{\ast}U  
	= 
	V \bB ^{\ast},	
	 \label{eqn.infinite-GKB-relation}
\end{align}
where 
\begin{align*}
	V
	=
	\begin{bmatrix}
		v_1 & v_2 & \cdots & v_m & \cdots
	\end{bmatrix}
	\in &
	\cL\prn{\ell_2, \cX},
	\qquad \mand 
	\\
	U
	=
	\begin{bmatrix}
		u_1 & u_2 & \cdots & u_m & \cdots
	\end{bmatrix}
	\in &
	\cL\prn{\ell_2, \cY}
\end{align*}
are linear operators which act as isometries on $\braces{\kappa_i}\in\ell_2$ \cite{CarusoNovati:2019:1} 
via infinite linear combination $V\braces{\kappa_i} = \sum_{i=1}^\infty \kappa_i v_i$, and  $U\braces{\kappa_i} =
\sum_{i=1}^\infty \kappa_i u_i$, and $\bB\in\cL\prn{\ell_2,\ell_2}$ is the infinite lower bidiagonal matrix 
\begin{align*}
	\bB  
	=
	\begin{bmatrix}
		 \alpha_1  &  &  &  &  \\  
		 \beta_2  & \alpha_2  &  &  &  \\  
		  & \beta_3  & \ddots &  &  \\  
		  &  & \ddots & \alpha_m &  &  \\ 
		  &  &  & \beta_{m+1} & \ddots \\
		  &  &  &  &   \ddots
	\end{bmatrix}, 	 
\end{align*}
and we note that we use Greek letters to denote the entries of bidiagonal $\bB$ to differentiate
them from the entries of tridiagonal $\bT$ from \eqref{eqn:infinite-tridiagonal}, which has 
entries denoted by letters from the Roman alphabet.
A similar setup is also used in the analysis in \cite{CarusoNovati:2019:1}. Following these authors, we denote
\begin{align*}
	\cK_\infty(A^\ast A, A^\ast y^\delta) 
	:= &
	\overline{
		\mathrm{span}\braces{v_1, v_2, \ldots, v_m, \ldots}
	},
	\qquad\mand
	\\
	\cK_\infty(AA^\ast , y^\delta) 
	:= & 
	\overline{
		\mathrm{span}\braces{u_1, u_2, \ldots, u_m, \ldots}
	}.
\end{align*}
It should be noted that this means that \Cref{ass:non-terminating-gkb} also implies that the infinite Lanczos iterations are non-terminating and 
$\cK_\infty(A^\ast A, A^\ast y^\delta)$ and $\cK_\infty(AA^\ast , y^\delta)$ are infinite dimensional.  We have the
following infinite Lanczos relations,
\begin{align}
	A^\ast A V = V \underline{\bT}
	\qquad\mand\qquad
	AA^\ast U = U \dunderline{\bT} 
	\label{eqn:infinite-tridiagonal}
\end{align}
wherein $\underline{\bT} = \bB^\ast\bB$ is the infinite symmetric tridiagonal matrix with diagonal entries 
$a_i = \alpha_i^2 + \beta_{i+1}^2$ and super- and sub-diagonal entries $b_i = \beta_i\alpha_i$, and $\dunderline{\bT}=\bB\bB^\ast$
with diagonal entries $\dunderline{a_i} = \alpha_i^2 + \beta_i^2$ and $\dunderline{b_i} = \alpha_{i-1}\beta_i$, for $i\geq 2$. We note that the entries of $\underline{\bT}$ correspond to those 
of $\bT_m$; \ie $\bT_m$ is the finite $m\times m$ submatrix of $\underline{\bT}$.
The authors of \cite{CarusoNovati:2019:1} are able to prove sufficient conditions for 
infinite dimensional \lsqr to converge to an exact solution, in the case that $y\in\range\prn{A}$.

In \cite{AlqahtaniRamlauReichel:2022:1}, the authors concern themselves with characterizing the entries of $\dunderline{\bT}$, as
this is what is needed in the analysis of the regularizing properties of \lsqr in the setting of that paper.  In this work, we
require a similar characterization of the entries of $\underline{\bT}$.  Fortunately, the same proof approach from
\cite{AlqahtaniRamlauReichel:2022:1} works in this context.
\begin{remark}
	In \cite{AlqahtaniRamlauReichel:2022:1}, the authors observe that
	$\dunderline{\bT}$ is obtained via unitary transformation of an
	infinite-dimensional, compact operator.  However, unlike in finite
	dimensions, it is not the operator $AA^\ast$.  It is clear that
	$\cK_\infty(AA^\ast , y^\delta)\subset \cY$, but the infinite Lanczos
	process may not generate a basis whose closure is $\cY$, with the
	dimension (finite or infinite) of $\cK_\infty(AA^\ast , y^\delta)$
	depending on properties of the operator and the starting vector
	\cite{CarusoNovati:2019:1}.  Instead, it is observed that $AVV^\ast
	A^\ast = U\bB\bB^\ast U^\ast= U\dunderline{\bT} U^\ast$, where
	$\widetilde{P}:=VV^\ast$ is a linear mapping into $\cK_\infty(A^\ast A
	, A^\ast y^\delta)$ \cite{CarusoNovati:2019:1}.  
\end{remark}
We note that since the coefficients $b_i$, $i=1,2,\ldots, m, \ldots$ are norms, it follows from \Cref{ass:non-terminating-gkb} that $b_i> 0$.
Following \cite{AlqahtaniRamlauReichel:2022:1}, we prove the following.
\begin{lemma}\label{lemma:T-offdiagonal-bound}
	Let $\widetilde{Q} := UU^\ast$ be a linear mapping into $\cK_\infty(AA^\ast , y^\delta)$.  Let the
	operator $A^\ast\widetilde{Q} A\in\cL(\cX)$ have the eigensystem $\braces{\lambda_i, s_i}$ with eigenvalue ordering
	$\lambda_{j+1}\geq\lambda_j\geq 0$ for all positive indices $j$.  Then it follows that
	\begin{align*}
		\prod_{i=2}^jb_i = \prod_{i=2}^j \beta_i\alpha_i \leq \norm[auto]{y^\delta}\prod_{i=1}^j \lambda_i.
	\end{align*}
\end{lemma}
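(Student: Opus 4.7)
The plan is to follow the polynomial min-max argument of \cite{AlqahtaniRamlauReichel:2022:1}, adapted from the $\dunderline{\bT}$ side of the \gkb (whose off-diagonals are $\alpha_{i-1}\beta_i$) to the $\underline{\bT}$ side (whose off-diagonals are $b_i = \alpha_i\beta_i$). First, I would iterate the Lanczos three-term recurrence $A^\ast A v_i = b_{i+1}v_{i+1} + a_i v_i + b_i v_{i-1}$ starting from $v_1$: by induction on $j$, the $v_j$-coefficient of $(A^\ast A)^{j-1} v_1$ in the Lanczos basis is exactly $\prod_{i=2}^j b_i$. Combining this with $v_1 = A^\ast y^\delta/\norm[auto]{A^\ast y^\delta}_\cX$, the operator identity $(A^\ast A)^{j-1} A^\ast = A^\ast(AA^\ast)^{j-1}$, the \gkb relation $A v_j = \alpha_j u_j + \beta_{j+1}u_{j+1}$, and the orthogonality $u_{j+1}\perp\cK_j(AA^\ast, y^\delta)$, one obtains
\begin{equation*}
	\prod_{i=2}^j b_i
	=
	\frac{\alpha_j}{\norm[auto]{A^\ast y^\delta}_\cX}\,\inner[auto]{u_j}{(AA^\ast)^{j-1} y^\delta}_\cY.
\end{equation*}

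The second step is the heart of the argument. Since $u_j \perp \cK_{j-1}(AA^\ast, y^\delta)$, for any monic polynomial $q$ of degree $j-1$ one has $\inner[auto]{u_j}{(AA^\ast)^{j-1} y^\delta}_\cY = \inner[auto]{u_j}{q(AA^\ast) y^\delta}_\cY$, and by Cauchy--Schwarz the latter is bounded in absolute value by $\norm[auto]{q(AA^\ast)y^\delta}_\cY$. Following the idea in \cite{AlqahtaniRamlauReichel:2022:1}, invariance of $\cK_\infty(AA^\ast, y^\delta)$ under $AA^\ast$ combined with the fact that $A^\ast$ sends this subspace into $\range(V)$ gives $(AA^\ast)^k y^\delta = (A\widetilde{P}A^\ast)^k y^\delta$ for all $k\geq 0$, so one may replace $q(AA^\ast)$ by $q(A\widetilde{P}A^\ast)$. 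The nonzero spectra of $A\widetilde{P}A^\ast = U\bB\bB^\ast U^\ast$ and $A^\ast\widetilde{Q}A = V\bB^\ast\bB V^\ast$ coincide by the standard $SS^\ast/S^\ast S$ argument, and are given by $\braces{\lambda_i}$. Taking $q(\lambda) = \prod_{k=1}^{j-1}(\lambda - \lambda_k)$ and invoking the spectral theorem, the bound $|q(\lambda_i)|\leq\prod_{k=1}^{j-1}\lambda_k$ for $i\geq j$ (since $\lambda_i\leq\lambda_k$ for $k\leq j-1$) yields
\begin{equation*}
	\norm[auto]{q(A\widetilde{P}A^\ast)y^\delta}_\cY
	\leq
	\prn{\prod_{k=1}^{j-1}\lambda_k}\norm[auto]{y^\delta}_\cY.
\end{equation*}

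Combining the preceding estimates with $\norm[auto]{A^\ast y^\delta}_\cX = \alpha_1\norm[auto]{y^\delta}_\cY$ produces a bound of the form $\prod_{i=2}^j b_i \leq (\alpha_j/\alpha_1)\prod_{k=1}^{j-1}\lambda_k$. The main obstacle I anticipate is the final bookkeeping step needed to match the precise right-hand side in the statement: one absorbs $\alpha_j$ via the Cauchy interlacing bound $\alpha_j^2\leq a_j\leq\lambda_1$ on the diagonal of the compression $\bT_j$, rewrites $1/\alpha_1 = \norm[auto]{y^\delta}_\cY/\norm[auto]{A^\ast y^\delta}_\cX$ to extract the factor $\norm[auto]{y^\delta}_\cY$, and produces the missing $\lambda_j$ factor from one additional application of the spectral estimate to the relevant residual vector. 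A cleaner alternative, in the spirit of \cite{AlqahtaniRamlauReichel:2022:1}, is to choose $q$ carrying an additional factor of $\lambda$, so that the extra $\lambda_j$ drops out directly from the spectral weighting.
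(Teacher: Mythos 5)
Your proposal follows the same core argument as the paper (which itself adapts the proof of Theorem 1 in \cite{AlqahtaniRamlauReichel:2022:1}): isolate $\prod_{i=2}^j b_i$ as the one coefficient that survives orthogonality against the lower-order Krylov terms, replace the monomial by the monic polynomial $q(\lambda)=\prod_{k=1}^{j-1}(\lambda-\lambda_k)$ with roots at the leading eigenvalues, and bound $\sup|q|$ over the spectrum of the compact operator by $|q(0)|=\prod_{k=1}^{j-1}\lambda_k$. The difference is one of routing: you carry the computation out on the $U$-basis/$AA^\ast$ side, starting from $v_1=A^\ast y^\delta/\norm[auto]{A^\ast y^\delta}_\cX$ and using $Av_j=\alpha_ju_j+\beta_{j+1}u_{j+1}$, which introduces the extra factors $\alpha_j$ and $1/\norm[auto]{A^\ast y^\delta}_\cX=1/(\alpha_1\norm[auto]{y^\delta}_\cY)$. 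The paper stays on the $V$/$\underline{\bT}$ side throughout: it identifies $\prod_{i=2}^jb_i$ with the entry $\be_j^Tp_{j-1}\prn{\underline{\bT}}\be_1$ via the banding structure of $\underline{\bT}^{j-1}$ (your induction on the three-term recurrence is the same fact in different clothing) and bounds that entry by $\norm[auto]{p_{j-1}\prn{\underline{\bT}}}\leq\prod_{i=1}^{j-1}\lambda_i$, so the factors $\alpha_j/\alpha_1$ never appear.

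The gap you flag in your last paragraph is real, and neither of your proposed repairs closes it. From $\alpha_j^2\leq a_j\leq\lambda_1$ you only get $\alpha_j\leq\sqrt{\lambda_1}$, which cannot manufacture the factor $\lambda_j$ (a quantity tending to zero) on the right-hand side; and the ``cleaner alternative'' of giving $q$ an extra factor of $\lambda$ raises $\deg q$ to $j$, at which point the identity $\inner[auto]{u_j}{(AA^\ast)^{j-1}y^\delta}_\cY=\inner[auto]{u_j}{q(AA^\ast)y^\delta}_\cY$ breaks down, because the new top term $(AA^\ast)^{j}y^\delta$ has a nonzero $u_j$-component in general. That said, the bound you do reach, $\prod_{i=2}^jb_i\leq(\alpha_j/\alpha_1)\prod_{k=1}^{j-1}\lambda_k$, is correct, is of the same strength as what the paper's own proof actually delivers (namely $\prod_{i=2}^jb_i\leq\prod_{i=1}^{j-1}\lambda_i$, which likewise does not literally match the constant $\norm[auto]{y^\delta}\prod_{i=1}^j\lambda_i$ in the statement), and suffices for the decay corollary the lemma feeds, since $\alpha_j$ is uniformly bounded. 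So the residual mismatch with the stated right-hand side is partly inherited from the source; the substantive correction to your plan is simply to work with $\underline{\bT}$ directly and avoid the $\alpha_j/\alpha_1$ detour altogether.
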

\begin{proof}
	This proof follows the same steps as the proof of \cite[Theorem
	1]{AlqahtaniRamlauReichel:2022:1}. We observe that the following
	identity falls out of the second infinite Lanczos relation
	\begin{align}
		A^\ast\widetilde{Q}A
		= &
		\prn{A^\ast U}\prn{A^\ast U}^\ast
		\nonumber
		\\
		=&
		\prn{V\bB^\ast}\prn{V\bB^\ast}
		\nonumber
		\\
		=&
		V\bB^\ast \bB\bV^\ast
		\nonumber
		\\
		=&
		V\underline{\bT}V^\ast
		\label{eq:infinite-tridiagonalization}
	\end{align}
	We write compactly the
	infinite eigen-decomposition $A^\ast\widetilde{Q}A=S\Lambda S^\ast$.  We define an indexed set of monic polynomials
	$p_m(t)=\prod_{i=1}^m(t-\lambda_i)$ having roots at subsets of the eigenvalues of $A^\ast\widetilde{Q}A$.  We apply the
	operator polynomial
	\begin{align}
		\norm[auto]{p_m\prn{A^\ast\widetilde{Q}A}} 
		= 
		\norm[auto]{p_m\prn{\Lambda}} 
		= 
		\sup_{j\geq m+1} \abs{p_m(\lambda_j)}
		\leq
		\abs{p_m(0)} 
		= 
		\prod_{i=1}^m\lambda_i. 
		\label{eq:poly-oper-tridiag}
	\end{align}
	This implies, e.g., that $\norm[auto]{p_m\prn{A^\ast\widetilde{Q}Ay^\delta}} \leq \norm[auto]{y^\delta}\prod_{i=1}^m\lambda_i.$
	Applying this operator polynomial to the starting vector of the Krylov subspace $\cK_\infty(AA^\ast , y^\delta)$ yields
	\begin{align*}
		p_m\prn{A^\ast\widetilde{Q}Ay^\delta}y^\delta
		= &
		U p_m\prn{\underline{\bT}}U^\ast y^\delta
		\\
		= &
		\norm[auto]{y^\delta}U p_m\prn{\underline{\bT}} \be_1.
	\end{align*}
	From this, we may conclude that 
	\begin{align}
		\norm[auto]{p_m\prn{A^\ast\widetilde{Q}A}y^\delta} 
		= &
		\norm[auto]{y^\delta}\norm[auto]{p_m\prn{\underline{\bT}}\be_1}
		\nonumber
		\\
		\geq &
		 \norm[auto]{y^\delta}\norm[auto]{\be_{m+1}^Tp_m\prn{\underline{\bT}}\be_1}.
		 \label{eq:poly-op-lower-bound}
	\end{align}
	The remainder of the proof follows by induction on $m$.  For the case $m=1$, the quantity in the norm from
	\eqref{eq:poly-op-lower-bound} reduces to 
	\begin{align*}
		\be_2^T p_1\prn{\underline{\bT}}\be_1
		= &
		\begin{bmatrix}
			\alpha_2\beta_2 & \alpha_2^2+\beta_3^2-\lambda_1 & \alpha_3\beta_3
		\end{bmatrix}
		\be_1
		\\
		= &
		\alpha_2\beta_2,
	\end{align*}
	and plugging this into  \eqref{eq:poly-op-lower-bound} for the case $m=1$ shows that the result holds in this case.
	Now suppose the result holds for $m= j$.  For the case $m = j+1$, we must calculate
	\begin{align}
		\be_{j+2}p_{j+1}&\prn{\underline{\bT}}\be_1 
		=  
		\be_{j+2}\prn{\underline{\bT}-\lambda_{j+1}\bI}p_j\prn{\underline{\bT}}\be_1
		\nonumber
		\\
		= &
		\begin{bmatrix}
			0 \cdots 0 & 
			\alpha_{j+2}\beta_{j+2} & 
			\alpha_{j+2}^2+\beta_{j+3}^2-\lambda_{j+1} & 
			\alpha_{j+3}\beta_{j+3} & 0 \cdots
		\end{bmatrix}
		p_m\prn{\underline{\bT}}\be_1,
		\label{eq:tridiag-induction-dot-product}
	\end{align}
	wherein the only nonzero entries correspond to entries $j+1$, $j+2$, and $j+3$ of $(j+2)$nd row of $\underline{\bT}$. It is
	well known that $\underline{\bT}$ encodes a nearest-neighbor coupling of an infinite one-dimensional lattice. Taking
	increasing powers of $\underline{\bT}$ increases the coupling distance by one per power increase; e.g.,
	$\underline{\bT}^2$ has two super- and two sub-diagonal bands; $\underline{\bT}^3$ has three super- and three sub-diagonal
	bands; and generally, $\underline{\bT}^m$ has $m$ super- and $m$ sub-diagonal bands.  Since $p_m\prn{t}$ is monic and of
	degree $m$, we know that the outermost $m$-th sub- and super-diagonal have entries only coming from the leading monic term
	$\underline{\bT}^m$.  To compute \eqref{eq:tridiag-induction-dot-product}, we are only interest in how the row forms a dot
	product with the first column of $\underline{\bT}^m$.  Because of the aforementioned banding structure, we observe that
	the first column is only nonzero until the 
	\linebreak
	$(m+1)$-st row.  Thus, we conclude $\be_{j+2}^T
	p_{j+1}\prn{\underline{\bT}}\be_1 = \alpha_{m+2}\beta_{m+2}\prod_{i=2}^{m+1}\alpha_i\beta_i$. Combining
	\eqref{eq:poly-oper-tridiag}, \eqref{eq:poly-op-lower-bound}, and \eqref{eq:tridiag-induction-dot-product} completes the
	proof.
\end{proof}
If follows directly from \Cref{lemma:T-offdiagonal-bound} that the off-diagonal entries of $\underline{\bT}$ exhibit a decay pattern.
\begin{corollary}
	It holds that $\lim_{j\rightarrow\infty}\prod_{i=2}^jb_i = 0$.
\end{corollary}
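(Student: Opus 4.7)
The plan is to combine \Cref{lemma:T-offdiagonal-bound} with basic properties of the eigenvalues of $A^\ast \widetilde{Q} A$. By \Cref{lemma:T-offdiagonal-bound} we already have the estimate
\begin{equation*}
	0 \leq \prod_{i=2}^j b_i \leq \norm{y^\delta}\prod_{i=1}^j \lambda_i,
\end{equation*}
so the corollary reduces to showing that $\prod_{i=1}^j \lambda_i \rightarrow 0$ as $j\rightarrow\infty$.

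The first step is to observe that the operator $A^\ast \widetilde{Q} A$ is compact and positive semi-definite. Compactness follows because $A$ (and hence $A^\ast$) is compact while $\widetilde{Q} = UU^\ast$ is bounded (it is in fact the orthogonal projection onto $\cK_\infty(AA^\ast,y^\delta)$), so the composition is compact. Positivity is immediate from $A^\ast \widetilde{Q} A = (\widetilde{Q}A)^\ast(\widetilde{Q}A)$, since $\widetilde{Q}$ is an orthogonal projection and thus idempotent and self-adjoint. Standard spectral theory for compact self-adjoint operators on a separable Hilbert space therefore guarantees that the eigenvalues $\braces{\lambda_i}$, arranged in the (non-increasing) order used for \Cref{lemma:T-offdiagonal-bound}, accumulate only at $0$; that is, $\lambda_i \rightarrow 0$.

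The second step is to deduce that $\prod_{i=1}^j \lambda_i \rightarrow 0$. If $\lambda_k = 0$ for some finite $k$, the product is zero for every $j\geq k$ and the claim is trivial. Otherwise all $\lambda_i$ are strictly positive and, by $\lambda_i\rightarrow 0$ together with the ordering assumption, there exists an index $K$ with $0 < \lambda_i \leq 1/2$ for all $i\geq K$. Setting $C := \prod_{i=1}^{K-1}\lambda_i$, we then have
\begin{equation*}
	\prod_{i=1}^{j}\lambda_i \;=\; C \prod_{i=K}^{j}\lambda_i \;\leq\; C\cdot 2^{-(j-K+1)} \;\longrightarrow\; 0,
\end{equation*}
which combined with the bound from \Cref{lemma:T-offdiagonal-bound} yields $\lim_{j\rightarrow\infty} \prod_{i=2}^j b_i = 0$.

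There is no real obstacle to this argument; each step is a direct consequence of facts already available. The only subtlety worth noting is that one should not need to invoke the potentially more delicate question of whether $\cK_\infty(AA^\ast, y^\delta)$ coincides with $\cY$ or whether infinitely many $\lambda_i$ are strictly positive, because in the degenerate case that finitely many are positive the claim holds trivially, and otherwise the geometric-tail argument above suffices.
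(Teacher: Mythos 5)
Your proof is correct and follows essentially the same route as the paper: invoke \Cref{lemma:T-offdiagonal-bound} and then use that the product of eigenvalues of the compact, self-adjoint, positive semi-definite operator $A^\ast\widetilde{Q}A$ tends to zero. You simply spell out the details (compactness and positivity of $A^\ast\widetilde{Q}A$, the accumulation of eigenvalues at zero, and the geometric-tail estimate) that the paper's one-line proof leaves implicit.
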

\begin{proof}
	From \Cref{lemma:T-offdiagonal-bound}, the product of the first $j-1$
	subdiagonal entries is bounded by a constant times the product of the
	first $j$ eigenvalues of a self-adjoint compact operator on a Hilbert
	space.  This product of eigenvalues necessarily decays to zero in the
	limit.  Thus, so does $\prod_{i=2}^j\alpha_i\beta_i$.
\end{proof}

\begin{remark}[Relation of Lanczos and \gkb to the singular system]
	\label{remark:gkb-singular-system-relationship}
	We would be remiss if we did not discuss the connections in numerical
	linear algebra between the \gkb and Lanczos iterations with the \svd
	and the approximation of singular values and vectors; see, e.g.,
	\cite[Section 10.4]{GolubVanLoan:2013:1} and \cite[Chapter
	6]{Saad:2011:1} and references cited therein. A detailed description
	and analysis of these techniques is beyond our scope. However, the
	basic idea is that at iteration $m$, one computes the \svd $\bB_m =
	\bF_m\bSigma_m\bG_m^T$ and takes $\prn{U_m\bF_m, \bSigma_m, V_m\bG_m}$
	as an approximation of a part of the singular system, with accuracy
	being governed by $m$ and properties of the operator.  
\end{remark}
\section{Filtration of the Lanczos vectors}
\label{section:filtration-lanczos-vectors}
The theory we have built up in this manuscript demonstrates that the \cgt iterates can be expressed as \dquotes{filtrations} of
the Lanczos directions of the corresponding \cgne iterates.  This is of interest both for better understanding the relationship of
\cgt and \cgne and for presenting a framework of Krylov basis vector filtration that may be useful for understanding the behavior
of existing methods and for developing new methods.

\subsection{Expressing \cgt as a filtration of the \cgne Lanczos directions}
We observe that the expressions for the $m$-th \cgne \eqref{eq:CGNE-Lanczos-basis} and \cgt iterates 
\eqref{eq:CGT-Lanczos-basis} in terms of the Lanczos coefficients share common factors in the coefficients.  Both sets of
coefficients have terms of the form $\prod_{i=2}^jb_i$, which decay in a manner controlled by the eigenvalues as shown in
\Cref{lemma:T-offdiagonal-bound}.  Thus, this product plays no role in relating the \cgne and \cgt coefficients.  The rest
can be related via algebraic manipulations.
\begin{lemma}
	Let 
	\begin{align*}
		\omega_i^{(m)} 
		= 
		\be_i^T\by_m
		\qquad
		\mbox{and}
		\qquad
		\omega_i^{(c,m)} 
		= 
		\be_i^T\by_m^{(c)}
	\end{align*}
	be the $i$-th \cgne and \cgt coefficients of the Lanczos vectors when the $m$-th iterates are expanded in the Lanczos
	basis.  Then we have 
	\begin{align*}
		\omega_i^{(c,m)}
		=
		\prn
		{
			1
			-
			\dfrac
			{g_m(c)\phi_{i+1}^{(m)} - h_{i+1}^{(m)}\det\bT_m}
			{\phi_{i+1}^{(m)}\prn{\det \bT_m + g_m(c)}}
		}
		\omega_i^{(m)}
	\end{align*}
\end{lemma}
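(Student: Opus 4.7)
The proof plan is to directly compute the ratio $\omega_i^{(c,m)}/\omega_i^{(m)}$ from the closed-form expressions for the \cgne and \cgt iterates already established in \eqref{eq:CGNE-Lanczos-basis} and \eqref{eq:CGT-Lanczos-basis}, and then verify by algebraic manipulation that this ratio agrees with the expression on the right-hand side of the claimed identity. Since $V_m$ acts on $\R^m$ by linear combination with the orthonormal Lanczos basis, the Lanczos-basis coefficient $\omega_i^{(m)}=\be_i^T\by_m$ is simply the scalar multiplying $v_i$ in \eqref{eq:CGNE-Lanczos-basis}, and similarly for $\omega_i^{(c,m)}$ in \eqref{eq:CGT-Lanczos-basis}.

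First, I would read off the two coefficients explicitly:
\begin{align*}
\omega_i^{(m)}
&=
\frac{(-1)^{i+1} b_1\cdots b_i\, \phi_{i+1}^{(m)}}{\det \bT_m},
\\
\omega_i^{(c,m)}
&=
\frac{(-1)^{i+1} b_1\cdots b_i\, \bigl(\phi_{i+1}^{(m)} + h_{i+1}^{(m)}(c)\bigr)}{\det(\bT_m + c\bI)}.
\end{align*}
Next, I would invoke the determinant identity $\det(\bT_m + c\bI) = \det \bT_m + g_m(c)$ from the corollary to \Cref{lemma.Tsig-inverse} to rewrite the denominator of $\omega_i^{(c,m)}$. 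Forming the ratio, the common factor $(-1)^{i+1}b_1\cdots b_i$ cancels cleanly, leaving
\[
\frac{\omega_i^{(c,m)}}{\omega_i^{(m)}}
=
\frac{\bigl(\phi_{i+1}^{(m)} + h_{i+1}^{(m)}(c)\bigr)\det\bT_m}{\phi_{i+1}^{(m)}\bigl(\det\bT_m + g_m(c)\bigr)}.
\]

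Finally, I would verify the desired form by placing the proposed right-hand side over the common denominator $\phi_{i+1}^{(m)}\bigl(\det\bT_m + g_m(c)\bigr)$ and simplifying the numerator: the contribution $\phi_{i+1}^{(m)}g_m(c)$ arising from the expansion of $\phi_{i+1}^{(m)}\bigl(\det\bT_m + g_m(c)\bigr)$ cancels against the $g_m(c)\phi_{i+1}^{(m)}$ term in the subtracted fraction, and the remaining terms collapse to $\bigl(\phi_{i+1}^{(m)} + h_{i+1}^{(m)}(c)\bigr)\det\bT_m$, matching the ratio above. This would close the proof after multiplying through by $\omega_i^{(m)}$.

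There is no real analytic obstacle; the work is purely algebraic rearrangement of quantities already in hand. The only place one must be careful is the implicit assumption that $\omega_i^{(m)}\neq 0$ (equivalently $\phi_{i+1}^{(m)}\neq 0$) so that the factorization through $\omega_i^{(m)}$ is meaningful; if $\phi_{i+1}^{(m)}=0$, the identity should instead be read in the equivalent additive form obtained \emph{before} dividing, namely $\omega_i^{(c,m)}\phi_{i+1}^{(m)}\bigl(\det\bT_m+g_m(c)\bigr) = \omega_i^{(m)}\bigl(\phi_{i+1}^{(m)}+h_{i+1}^{(m)}(c)\bigr)\det\bT_m$. It may be worth including a one-line remark acknowledging this, since neither $\phi_{i+1}^{(m)}$ nor $h_{i+1}^{(m)}(c)$ has been shown to be sign-definite in the earlier sections.
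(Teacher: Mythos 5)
Your proposal is correct and follows essentially the same route as the paper's proof: read the coefficients off \eqref{eq:CGNE-Lanczos-basis} and \eqref{eq:CGT-Lanczos-basis}, replace $\det(\bT_m+c\bI)$ by $\det\bT_m+g_m(c)$, and rearrange algebraically to expose the factor $\omega_i^{(m)}$ and pull out the $1$. Your closing caveat about $\phi_{i+1}^{(m)}=0$ is a reasonable extra precaution that the paper does not address, but it does not alter the argument.
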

\begin{proof}
	We manipulate \eqref{eq:CGT-Lanczos-basis} to prove the result, whereby it follows that
	\begin{align*}
		\omega_i^{(c,m)}
		=&
		\dfrac{1}{\det\prn{\bT_m + c\bI}}
		(-1)^{i+1}
		b_1\cdots b_i
		\prn{\phi_{i+1}^{(m)} + h_{i+1}^{(m)}(c)}
		\\
		=&
		\dfrac{1}{\det\bT_m + g_m(c)}
		(-1)^{i+1}
		b_1\cdots b_i
		\prn{\phi_{i+1}^{(m)} + h_{i+1}^{(m)}(c)}	
		\\
		=&
		\dfrac{1}{\det\bT_m + g_m(c)}
		\prn
		{
			(-1)^{i+1}
			b_1\cdots b_i
			\dfrac{\phi_{i+1}{(m)}}{\det\bT_m}
		}
		\prn{\phi_{i+1}^{(m)} + h_{i+1}^{(m)}(c)}	
		\dfrac{\det\bT_m}{\phi_{i+1}{(m)}}	
		\\
		=&
		\dfrac
		{
			\det\bT_m
			\prn
			{
				\phi_{i+1}^{(m)} + h_{i+1}^{(m)}(c)
			}
		}
		{
			\phi_{i+1}^{(m)}
			\prn
			{
				\det\bT_m + g_m(c)
			}
		}
		\omega_i^{(m)}.
%		\\
%		=&
%		\dfrac
%		{
%			\det\bT_m\phi_{i+1}^{(m)} + \det\bT_m h_{i+1}^{(m)}(c)
%		}
%		{
%			\det\bT_m \phi_{i+1}^{(m)}+ g_m(c)\phi_{i+1}^{(m)}
%		}
%		\omega_i^{(m)}
%		\\
%		=&
%		\dfrac
%		{
%			\det\bT_m\phi_{i+1}^{(m)} 
%			+ 
%			\det\bT_m h_{i+1}^{(m)}(c)
%			+ 
%			g_m(c)\phi_{i+1}^{(m)}
%			-
%			g_m(c)\phi_{i+1}^{(m)}
%		}
%		{
%			\det\bT_m \phi_{i+1}^{(m)}+ g_m(c)\phi_{i+1}^{(m)}
%		}
%		\omega_i^{(m)}
	\end{align*}
	Some further manipulation to pull a $1$ out of the fractional coefficient yields the result.
\end{proof}
As a shorthand, we denote 
\begin{align}
	\gamma_i^{(m)}(c) 
	=&
	\prn
	{
		1
		-
		\frac
		{g_m(c)\phi_{i+1}^{(m)} - h_{i+1}^{(m)}(c)\det\bT_m}
		{\phi_{i+1}^{(m)}\prn{\det \bT_m + g_m(c)}}
	}
	\nonumber
	\\
	=&
	1 
	-
	\frac{
		g_m(c)
	}
	{
		\det \bT_m + g_m(c)
	}
	- 
	\frac{
		h_{i+1}^{(m)}(c)\det \bT_m
	}
	{
		\phi_{i+1}^{(m)} 
		\prn{
			\det\bT_m + g_m(c)
		}
	}
	\label{eqn:gamma-Lanczos-filter-definition}
\end{align}
so that we can write $\omega_i^{(c,m)}=\gamma_i^{(m)}(c)\omega_i^{(m)}$. From
the second line of \eqref{eqn:gamma-Lanczos-filter-definition} combined with
\Cref{corollary.f-g-poly}, we conclude that the first term approaches $1$ as
$c\rightarrow \infty$ and to $0$ as $c\rightarrow 0$, and the second term
approaches $0$ as $c\rightarrow \infty$ and as $c\rightarrow 0$.  The transient
behavior between these extremes is determined by the Lanczos coefficients, and
thus by the operator and right-hand side.

We note that we can infer from \Cref{remark:gkb-singular-system-relationship}
that any right singular vector having highly non-trivial inner product with
Lanczos direction $v_i$ will be partially damped by $\gamma_i^{(m)}(c)$.
Detailed convergence analysis has been well-documented; see, \eg, \cite[Section
6.6]{Saad:2011:1} and references cited therein.
\subsection{A tale of two residuals}
It is important to understand, as well, how the residuals of \cg and \cgt
relate, since stopping criteria such as the discrepancy principle rely on the
norm of the residual.  However, we observe that there are actually two types of
residual to consider.  As we are solving the normal equations \eqref{eqn.ne},
the the \cg and \cgt iterates have a residual associated with these equations,
which lives in the solution space $\cX$.  Additionally, there is also the
residual connected to the original approximation problem \eqref{eqn.AxAppy},
which lives in the data space $\cY$ and that we call the \dquotes{natural}
residual.

In \cite[Lemma 2.2]{FrommerKahlLippertRittich:2013:1}, it is observed,
following from \cite{PaigeParlettVanDerVorst:1995:1} that the normal equations
residual satisfies
\begin{align}\label{eqn.CG-resid}
    A^{\ast}\prn{y - Ax_m }
	=
	-b_1 b_m \prn{\bT_m ^{-1}}_{m1}v_{m+1}.
\end{align}
It then follows that the \cg-based Tikhonov residual must also 
satisfy a similar relation, namely
\begin{align}\label{eqn.CG-Tikh-resid}
    \prn{A^{\ast}y - (A^{\ast}A + c I_{\cX})x_m ^{c}}
	=
	-b_1 b_m \prn{ \prn{\bT_m  + c\bI}^{-1}}_{m1}v_{m+1}.
\end{align}
From \eqref{eqn.Tm-inverse}, we see that 
\begin{align}
    \prn{\bT_m ^{-1}}_{m1}
	=
	\dfrac{\prn{-1}^{m+1} b_2 \cdots b_m \theta_0 \phi_{m+1}^{(m)}}{\theta_m }=\dfrac{\prn{-1}^{m+1} b_2 \cdots b_m }{\theta_m }.
\end{align}
Similarly, we have for the shifted system, 
\begin{align}
    \prn{ \prn{\bT_m  + c\bI}^{-1}}_{m1}
	=
	\dfrac{\prn{-1}^{m+1} b_2 \cdots b_m }{\theta_m  + g_m(c)}.
\end{align}
From this we can conclude,
\begin{lemma}
    The residuals \eqref{eqn.CG-resid} and \eqref{eqn.CG-Tikh-resid}
    can be related by 
    \begin{align}\label{eqn.resid-relation}
        \prn{A^{\ast}y - (A^{\ast}A + c I_{\cX})x_m ^{(c)}}= \dfrac{\theta_m }{\theta_m  + g_m(c)} A^{\ast}\prn{y - Ax_m }
    \end{align}
\end{lemma}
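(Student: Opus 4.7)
The plan is to derive the identity by simply taking the ratio of the two closed-form residual expressions already displayed just before the lemma. Both residuals have been written as a common scalar factor $-b_1 b_m v_{m+1}$ multiplied by a single scalar entry, namely the $(m,1)$-entry of the inverse of the respective tridiagonal matrix. Thus the entire content of the lemma is encoded in how $\prn{\bT_m^{-1}}_{m1}$ and $\prn{(\bT_m+c\bI)^{-1}}_{m1}$ compare.

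First, I would substitute the explicit formula for $\prn{\bT_m^{-1}}_{m1}$, which is obtained from the off-diagonal case $i>j$ of equation \eqref{eqn.Tm-inverse} with $j=1$ and $i=m$, giving numerator $(-1)^{m+1}b_2\cdots b_m\,\theta_0\,\phi_{m+1}^{(m)} = (-1)^{m+1} b_2\cdots b_m$ (using $\theta_0=\phi_{m+1}^{(m)}=1$) over denominator $\theta_m$. Next, I would apply the same formula to the shifted matrix $\bT_m+c\bI$; because the off-diagonal entries $b_i$ are unaffected by a diagonal shift and because the boundary values $\theta_0(c)=1$ and $\phi_{m+1}^{(m)}(c)=1$ do not see $c$ either, the numerator is identical. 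By \Cref{lemma.Tsig-inverse}, the only change is in the denominator, which becomes $\theta_m(c)=\theta_m+g_m(c)$.

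Dividing one expression by the other then yields the scalar ratio $\theta_m/\bigl(\theta_m+g_m(c)\bigr)$, and multiplying \eqref{eqn.CG-resid} by this ratio reproduces \eqref{eqn.CG-Tikh-resid}, which is exactly the claimed identity \eqref{eqn.resid-relation}. Since the vectorial part $v_{m+1}$ and the leading factor $-b_1 b_m$ are common to both residuals, no further manipulation is needed.

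I do not foresee a genuine obstacle in this argument: the entire proof is a one-line algebraic consequence of the Usmani-type formula \eqref{eqn.Tm-inverse} applied in parallel to $\bT_m$ and to $\bT_m+c\bI$, combined with the decomposition $\theta_m(c)=\theta_m+g_m(c)$ established in \Cref{lemma.Tsig-inverse}. The only thing to be careful about is verifying that the relevant boundary values of $\theta$ and $\phi$ are unchanged under the diagonal shift, which is immediate from their definitions.
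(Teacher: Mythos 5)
Your proposal is correct and follows exactly the paper's own route: the paper likewise evaluates $\prn{\bT_m^{-1}}_{m1}$ and $\prn{(\bT_m+c\bI)^{-1}}_{m1}$ via the $i>j$ case of \eqref{eqn.Tm-inverse}, notes that the numerator $(-1)^{m+1}b_2\cdots b_m$ is unaffected by the diagonal shift while the denominator changes from $\theta_m$ to $\theta_m+g_m(c)$ by \Cref{lemma.Tsig-inverse}, and reads off the ratio. No differences worth noting.
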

Consider that these residual expressions are ones mapped to $\cX$
by the adjoint operator $A^{\ast}$, and 
$\prn{A^{\ast}y - (A^{\ast}A + c I_{\cX})x_m ^{c}}$ is only
interesting as a measure of convergence of the \cg iteration.  The residuals
we are interested for analysis purposes are the natural residuals
\begin{align}
    r_m 
	=
	y - Ax_m \qquad\mbox{and}\qquad r_m ^{(c)}
	=
	y - Ax_m ^{(c)}.
\end{align}
With these definitions, we can rewrite \eqref{eqn.resid-relation} as
\begin{align}
    A^{\ast}r_m 
	=
	\dfrac{\theta_m  + g_m(c)}{\theta_m }\prn{A^{\ast}r_m ^{(c)} - c x_m ^{(c)}}.
\end{align}
This is interesting but still only tells us about the relationship of
the residuals when they are mapped to the solution space $\cX$ by $A^\ast$. 
We can instead use the Golub-Kahan relation to obtain a representation of the residual
in terms of the Lanczos filters.
\begin{corollary}
	The $m$-th \cgt residual $r_m^{(c)} = y^\delta - Ax_m^{(c)}$ admits the representation
	\begin{align}
		r_m^{(c)} 
		=
		\prn{
			\beta_0 
			- 
			\gamma_1^{(m)}(c)\omega_1^{(m)}\alpha_1 
		}
		u_1
		-&
		\sum_{i=2}^m
		\prn{
			\gamma_i^{(m)}(c)\omega_i^{(m)}\alpha_i 
			- 
			\gamma_{i-1}^{(m)}(c)\omega_{i-1}^{(m)}\beta_i
		}
		u_i
		\nonumber
		\\
		-&
		\gamma_m^{(m)}(c)\omega_m^{(m)}\beta_{m+1}
		u_{m+1},
		\label{eqn:resid-rep}
	\end{align}
	and it follows that 
	\begin{align*}
		\norm[auto]{r_m^{(c)}}^2
		=
		\prn{
			\beta_0 
			- 
			\gamma_1^{(m)}(c)\omega_1^{(m)}\alpha_1 
		}^2
		+&
		\sum_{i=2}^m
		\prn{
			\gamma_i^{(m)}(c)\omega_i^{(m)}\alpha_i 
			- 
			\gamma_{i-1}^{(m)}(c)\omega_{i-1}^{(m)}\beta_i
		}^2
		\nonumber
		\\
		+&
		\prn{
			\gamma_m^{(m)}(c)\omega_m^{(m)}\beta_{m+1}
		}^2
	\end{align*}
\end{corollary}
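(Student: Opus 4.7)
The plan is to substitute the Lanczos-basis expansion of $x_m^{(c)}$, namely $x_m^{(c)} = \sum_{i=1}^m \omega_i^{(c,m)} v_i = \sum_{i=1}^m \gamma_i^{(m)}(c)\, \omega_i^{(m)} v_i$, which is established in the preceding lemma, into $r_m^{(c)} = y^\delta - A x_m^{(c)}$ and then to reorganize using the Golub-Kahan recurrence $A v_i = \alpha_i u_i + \beta_{i+1} u_{i+1}$ from \eqref{eqn:GK-3-term-recur}. Combined with the initialization identity $y^\delta = \beta_0 u_1$, this first step yields
\[
r_m^{(c)}
=
\beta_0 u_1
-
\sum_{i=1}^m \gamma_i^{(m)}(c)\, \omega_i^{(m)} \prn{\alpha_i u_i + \beta_{i+1} u_{i+1}},
\]
and no further input from the $\gamma_i^{(m)}(c)$ machinery is needed after this point.

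The second step is a purely algebraic regrouping. I would split the sum into its $\alpha$ and $\beta$ contributions, then reindex the $\beta$ portion via $j = i+1$ so that both pieces are summed against the same family $\{u_j\}$. Gathering the coefficient of each $u_j$ produces a scalar in which the $\alpha_j$ contribution at index $j$ combines with the $\beta_j$ contribution inherited from index $j-1$. The two boundary cases require individual treatment: the coefficient of $u_1$ absorbs the $\beta_0$ contribution from $y^\delta$ and carries no $\beta$ term, while $u_{m+1}$ acquires only the trailing $\gamma_m^{(m)}(c)\, \omega_m^{(m)}\beta_{m+1}$ contribution since the $\alpha$ part of the sum terminates at $i=m$. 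Reading off these boundary and interior coefficients produces the representation \eqref{eqn:resid-rep}.

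For the norm identity, I would invoke the orthonormality of the Golub-Kahan vectors $\{u_1,\ldots,u_{m+1}\}$, which is one of the defining properties of the GKB construction. Since \eqref{eqn:resid-rep} writes $r_m^{(c)}$ as a linear combination of pairwise orthonormal vectors in $\cY$, Parseval's identity gives $\norm[auto]{r_m^{(c)}}^2$ as the sum of squares of the scalar coefficients. Note that the internal sign inside the interior parenthesis is immaterial at this step because each term is squared.

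The main obstacle is not conceptual but organizational: one must handle the boundary indices $i=1$ and $i=m+1$ carefully when reindexing the shifted sum and track the signs consistently so that the $\alpha$-at-$i$ and $\beta$-at-$i$ pieces are correctly associated with the same basis vector $u_i$. No analytic machinery beyond the filter-coefficient expansion of the preceding lemma and the two-sided GKB recurrence \eqref{eqn:GK-3-term-recur} is required.
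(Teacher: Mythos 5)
Your proposal is correct and follows essentially the same route as the paper's own (much terser) proof: substitute $x_m^{(c)}=\sum_{i=1}^m\gamma_i^{(m)}(c)\,\omega_i^{(m)}v_i$ into the residual, expand each $Av_i$ via the Golub--Kahan recurrence together with $y^\delta=\beta_0 u_1$, collect coefficients of each $u_j$, and invoke orthonormality of the $u_j$ for the norm identity. Your careful bookkeeping in fact shows the interior coefficient is $-\bigl(\gamma_i^{(m)}(c)\omega_i^{(m)}\alpha_i + \gamma_{i-1}^{(m)}(c)\omega_{i-1}^{(m)}\beta_i\bigr)$, so the minus sign inside the parenthesis in \eqref{eqn:resid-rep} appears to be a typographical slip in the statement; as you note, this is immaterial for the squared-norm formula.
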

\begin{proof}
	This result is obtained by inserting the expression $x_m^{(c)}=\sum_{i=1}^{m}\gamma_i^{(m)}(c)\omega_i^{(m)}v_i$ into the
	residual and applying the Golub-Kahan relation \eqref{eqn:GK-3-term-recur}.  Then one simply collects terms for each
	$u_i$.  That the vectors $\paren[auto]{\{}{\}}{u_i}$ form an orthonormal basis yields the square norm immediately.
\end{proof}
%We can instead build on 
%\eqref{eqn.iterate-rel} to obtain a relation on the
%residuals in the data space $\cY$.
%\begin{corollary}
%    The residuals $r_m $ and $r_m ^{(c)}$ satisfy
%    \begin{align}
%        r_m 
%	=
%	 r_m ^{(c)} -\dfrac{g_m(c)}{\theta_m }Tx_m ^{(c)} + \frac{b_1}{\theta_m}\prn{h_2 (c)Av_1 + \sum_{i=2}^{m} b_2 \cdots b_i h_{i+1}(c)Av_i}.
%    \end{align}
%    \begin{align}
%        r_m ^{(c)}
%	=
%	r_m+ \dfrac{g_m(c)}{\theta_m }Ax_m ^{(c)} - \frac{b_1}{\theta_m}\prn{h_2 (c)Av_1 + \sum_{i=2}^{m} (-1)^{i+1} b_2 \cdots b_i h_{i+1}(c)Av_i}
%    \end{align}
%\end{corollary}
\subsection{More general notions of filtration}
One observes that this analysis allows us to develop expressions for the specific Lanczos filters $\gamma_i^{(m)}(c)$ associated
to \cgt.  However, this notion of Lanczos filtration can be divorced from \cgt-specific derivations.  One can instead consider
strategies directly stemming from the filtration process.  For example, for the \texttt{shaw\_chebfun()} problem introduced in
\Cref{subsection:organization}, the discrepancy principle stopping criterion halts the iteration at $m_{discr}=7$.  Thus, for any
$m>m_{discr}$, it follow that we can express $x_{m_{discr}} = \sum_{i=1}^m \gamma_i^{(discr)}\omega_i^{(m)}v_i$ where
$\gamma_i^{(discr)}=0$ for $i>7$. In essence, the filters $x_{m_{discr}}$ with respect to $x_m$ are almost truncation filters,
similar to those for truncated \svd with respect to the singular system.  However, as demonstrated in
\Cref{figure:shaw-lanczos-filters}, this is not quite the case since $\gamma_i^{(discr)}$ for $i\leq 7$ are not necessarily $1$.
The general use of Lanczos filters (or filters applied to other bases for different types of iterative methods) to analyze or
develop new methods is beyond the scope of the present work.  In \Cref{section.conclusions}, we discuss this a bit further in the
context of future work.
\begin{figure}
	\begin{centering}
		\includegraphics[scale=0.3]{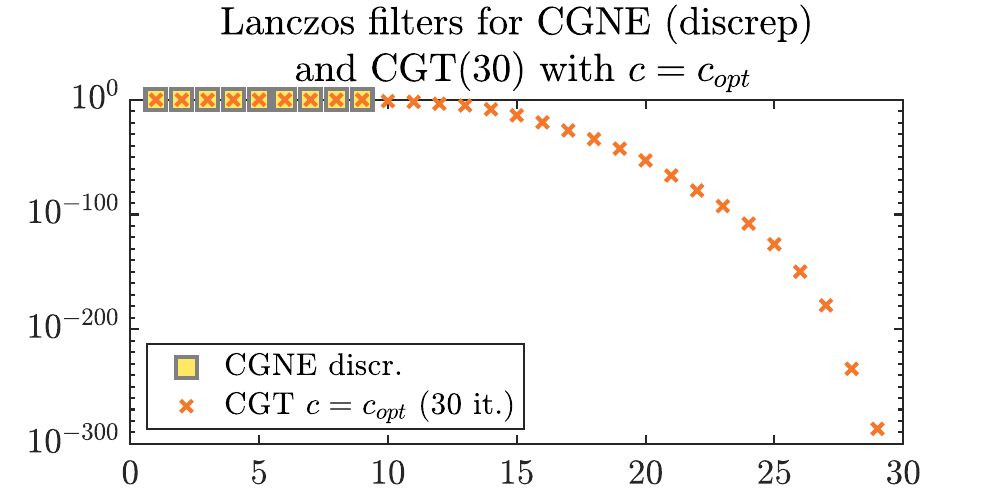}
		\includegraphics[scale=0.3]{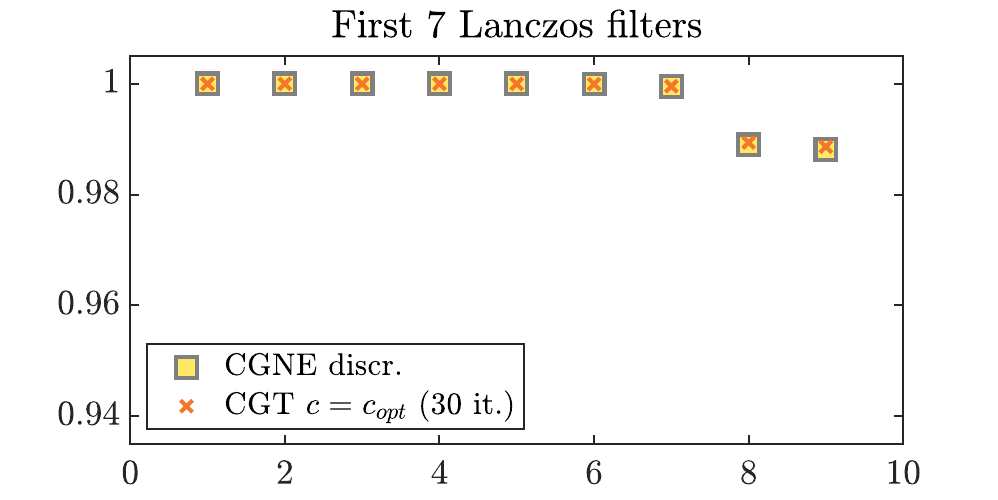}
	\end{centering}
	\caption{\label{figure:shaw-lanczos-filters}Lanczos filters for discrepancy principle iterate
	$m_{discr}=7$ and \cgt $c=c_{opt}$ with respect to \cgne(30) iterate.}
\end{figure}
\subsection{Dampening of propagated noise}
It is well documented that running too many iterations of \cgne applied to an ill-posed problem can amplify aspects of the noise to
pollute the reconstructed solutions.  One can understand this, e.g., by observing that it has been proven that \cgne converges to
the pseudoinverse solution $x^\dagger$; see e.g., \cite[Theorem 7.9]{EnglHankeNeubauer:1996:1} and references therein.  The
singular system representation \eqref{eqn:SVD-pseudoinverse-soln} shows that arbitrarily small perturbations of $y$ lead to
unbounded perturbations of $x^\dagger$. This necessarily means that the \gkb vectors will increasingly contain the amplified noise
present in later singular directions.  Thus, the Lanczos filters can be understood to be dampening the influence of these vectors.
Thus, for a given $y^\delta$, we expect the $x_m^\delta = \sum_{i=1}^m \omega_i^{(m)} v_i$ to be unbounded as $m\rightarrow\infty$. In
\Cref{fig:shaw-varTikh-lanczos-filters}, we observe that even for small values of the Tikhonov parameter, the Lanczos filters
decay rapidly for the later bidiagonalization vectors. 
\begin{figure}
	\begin{center}
		\includegraphics[scale=0.3]{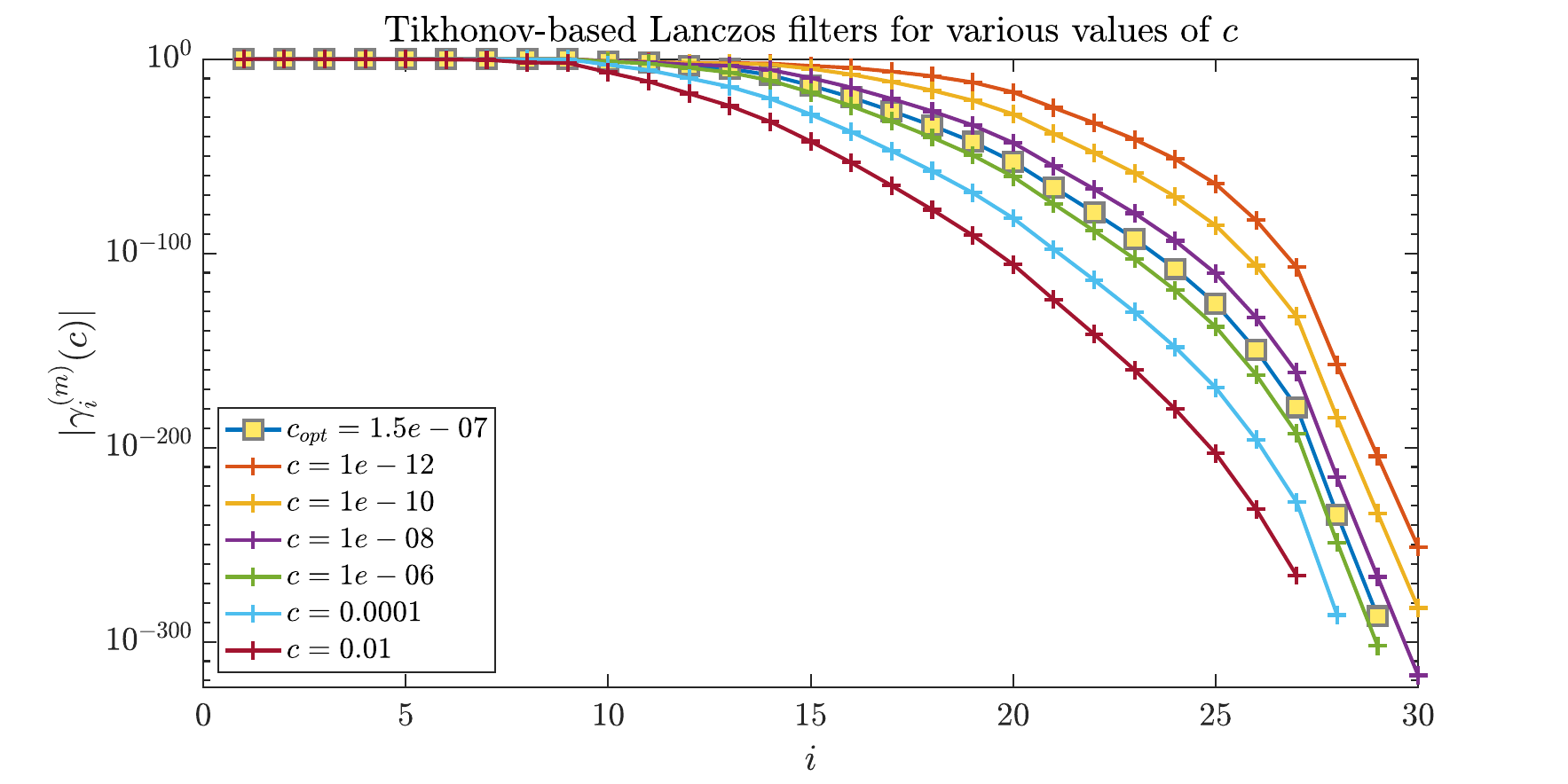}
	\end{center}
	\caption{\label{fig:shaw-varTikh-lanczos-filters}Demonstration of the Lanczos filter dampening effect.}
\end{figure}
\begin{remark}
	We note that the result from \cite[Theorem 7.9]{EnglHankeNeubauer:1996:1} indicates that \cgt for any value of $c$ will
	also converge to the pseudoinverse solution of the Tikhonov problem; i.e., it converges to the Tikhonov solution
	$x^{(c)}$ for a given $c$.  As the Tikhonov problem is by construction well-posed, it follows that $x^{(c)}$ is bounded
	for any $c>0$. We can thus represent the Tikhonov solution
	\begin{align*}
			x^{(c)} = \sum_{i=1}^\infty \omega_i^{(c)} v_i := \lim_{m\rightarrow\infty}\sum_{i=1}^m\omega_i^{(m,c)} v_i,
	\end{align*}
	and we note that the convergence of this series indicates that coefficients $\omega_i^{(c)}$ must exhibit sufficiently
	rapid (in the $\ell_2$ sense) decay behavior.  Thus, the results we have shown concerning the Tikhonov-based Lanczos
	filtration of the \cg iterates is not, in itself, a surprising result.  Rather, it is the interpretation of \cgt as a
	filtration of the Lanczos vectors and the precise nature of these filters that is of interest.
\end{remark}

Restricting to the finite-dimensional, discrete ill-posed setting allows for clearer illustration of this behavior. Such problems
arise from the discretization of an ill-posed problem \eqref{eqn.AxAppy}, e.g., $\bA_h\bx_h \approx \by_h$ wherein the resulting
matrix $\bA_h\in\R^{m\times n}$ is \emph{discretely ill-posed}. This means its singular values decay rapidly with no clear break
between \dquotes{good} and \dquotes{bad} singular values.  In this case, the pseudoinverse solution is not unbounded, but it
departs significantly from the true solution such that it is often meaningless. In \cite{HnetynkovaPlesingerStrakos:2009:1}, it is
shown under some mild assumptions that the Golub-Kahan bidiagonalization is a discrete noise-revealing iterative method, as the
noise propagates and is amplified as the iteration progresses. This can be used to estimate the noise level when it is not known
apriori.  This idea is extended further in \cite{HnetynkovaKubinovaPlesinger:2017:1} to understand how the discrete noise
propagates in \gkb-based iterations such as \lsqr.  Indeed, the authors show that as the iteration progresses, the noise is
propagated and amplified via the left bidiagonalization vectors into the residual.  Thus, we can interpret the Tikhonov-based
Lanczos filtration as dampening the Lanczos directions that propagate the amplified noise. 
\section{A noisier numerical example}
\label{section.numerical-examples}
In the derivation of the ideas in this work, we already present one set of numerical experiments for the \texttt{shaw} problem.
This work is proof-of-concept in nature, meant to serve as a foundation for future development of practical methods based on the
ideas presented, and the experiments demonstrate the ideas. Indeed we use the theoretically optimal (and practically unavailable) 
Tikhonov parameter in our experiments, to this end. 

With this in mind, we provide a further demonstration in this section, using the \texttt{gravity} example from the
chebfun-based toolbox introduced in \cite{AlqahtaniMachReichel:2022:1}.  The \texttt{gravity} problem as a one-dimensional
Fredholm (first kind) integral equation model from \dquotes{gravity surveying} with kernel 
$k(s,t) = \dfrac{1}{4}\cdot\prn{\dfrac{1}{4}^2 + (s-t)^2}^{-3/2}$.
We increase the noise level in this experiment to $10^{-2}$. 
In the toolbox, this example has options to provide different right-hand sides; we choose the second option, a piecewise linear
solution.  Noise is generated as before using the \chebfun random function generation procedure.  In \Cref{fig:gravity-setup}
\begin{figure}
	\centering
	\includegraphics[scale=0.4]{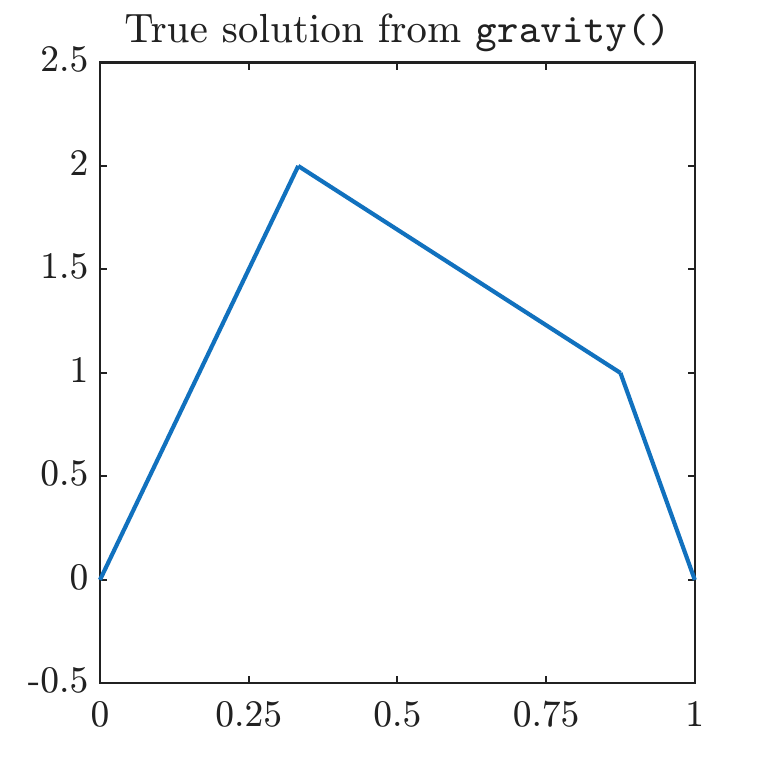}
	\includegraphics[scale=0.4]{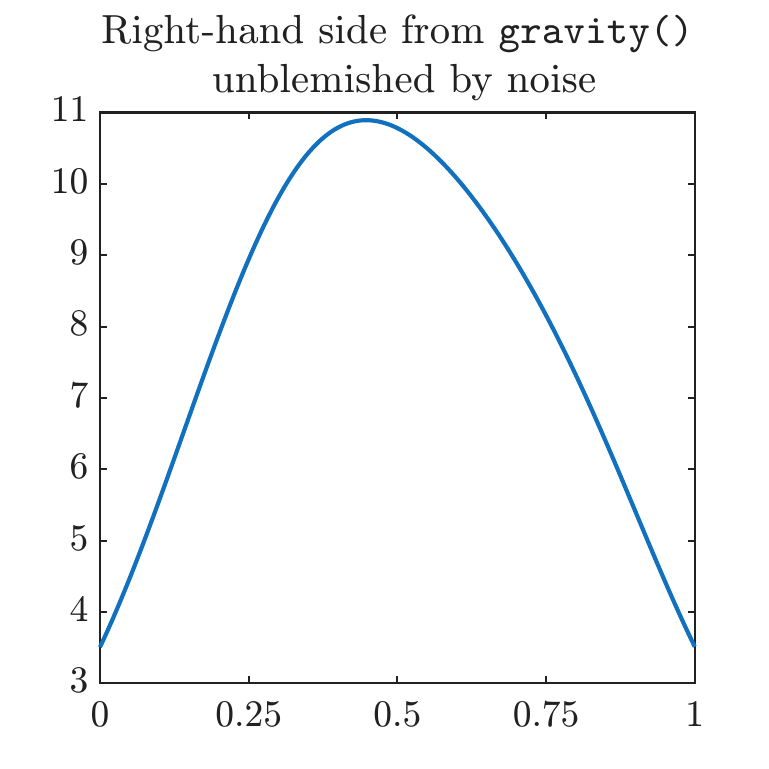}
	\\
	\includegraphics[scale=0.4]{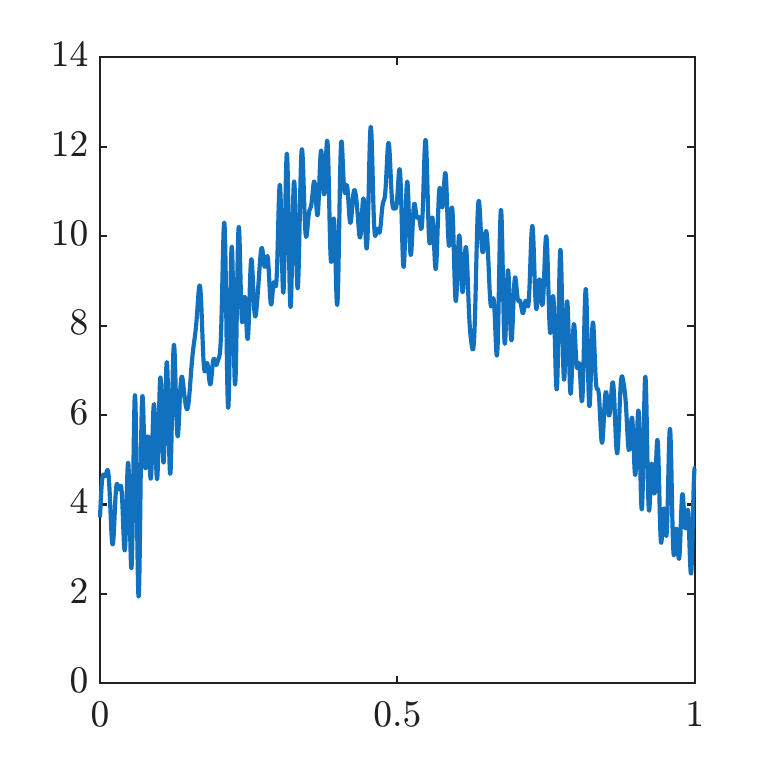}
	\includegraphics[scale=0.4]{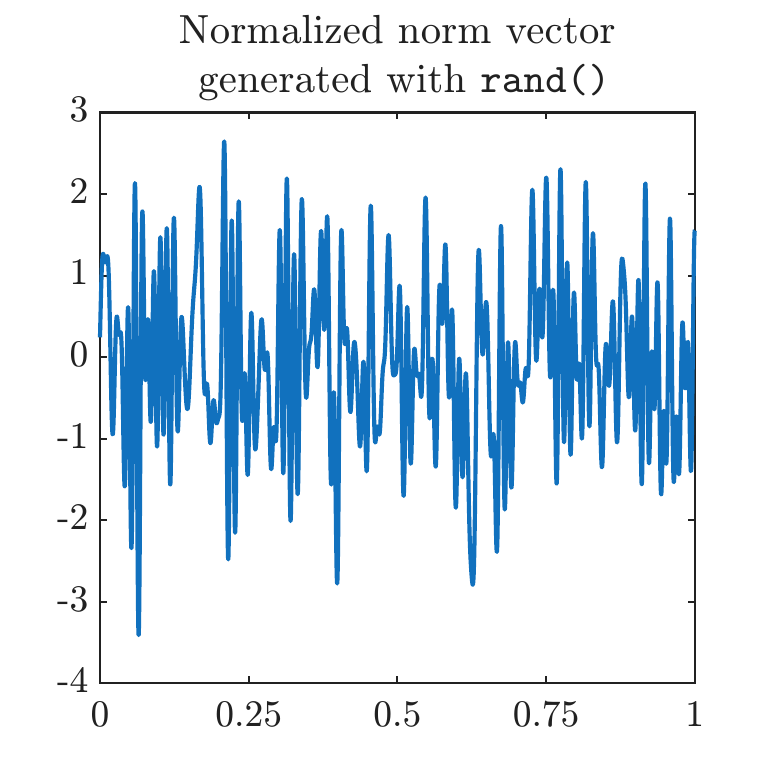}
	\caption{\label{fig:gravity-setup}The true piecewise linear solution and the right-hand side (unblemished and perturbed
	with noise) and the noise for the \texttt{gravity} problem with relative noise level $10^{-2}$.}
\end{figure}
we show the exact, piecewise linear solution and the unperturbed right-hand side, along with the noisy right-hand side and the normalized
noise.

We choose this example because Krylov-type iterative approaches often have difficulties resolving areas of non-smoothness of the
true solution, since they are built from a noisy version of the right-hand side (which is often smooth) and powers of $A^\ast A$
applied to it.  Indeed, we see in \Cref{fig:gravity-solves} that both the \cg discrepancy solution and the optimal \cgt solution that
both attempt to smoothly reconstruct the solution and are unable to resolve the piecewise transitions between linear
parts of the solution.  Due to the level of noise, the discrepancy principle stops \cgne after two iterations, but optimal \cgt
damps the coefficients of \cgne in order to be able to further refine the solution approximation in a meaningful way. 
\begin{figure}
	\centering
	\includegraphics[scale=0.35]{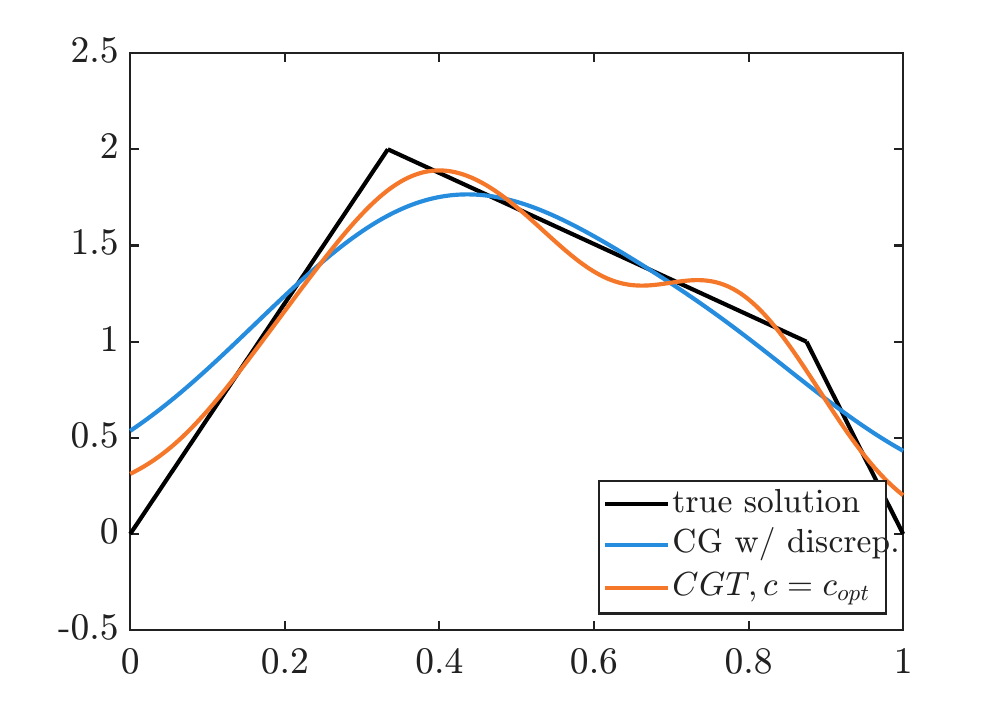}
	\includegraphics[scale=0.35]{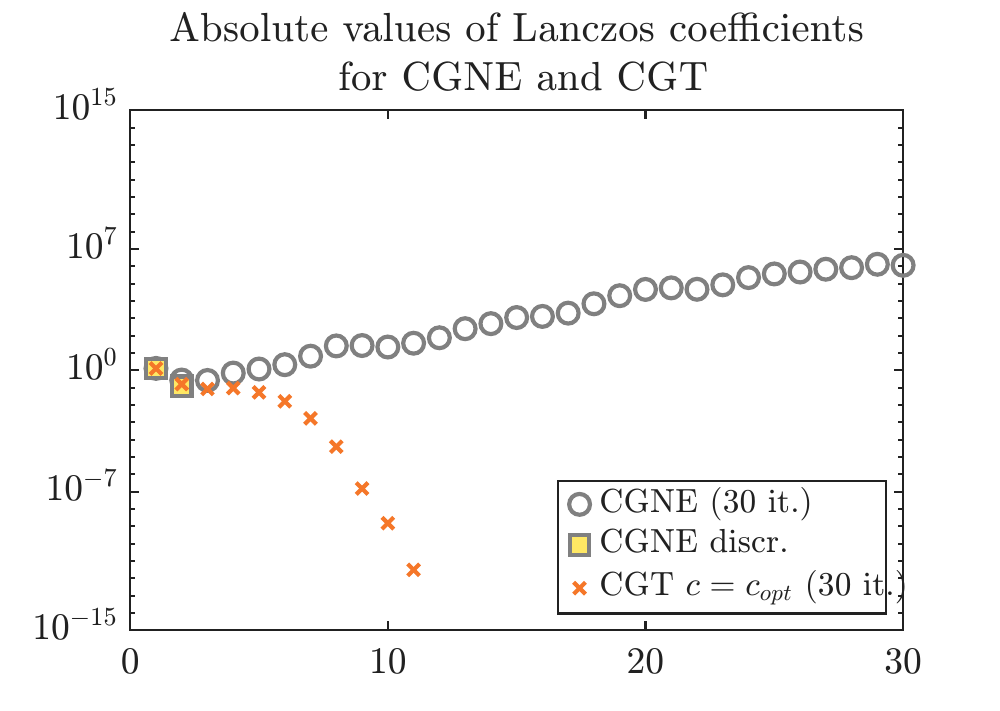}
	\caption{\label{fig:gravity-solves} We compare the true solution with the \cg discrepancy solution and optimal
	\cgt solution. Note: we display absolute values of coefficients to show them all in a single \texttt{semilogy} 
	plot.}
\end{figure}
\begin{remark}
	We conclude by noting that although \chebfun allows us to simulate running these methods for infinite dimensional,
	compact problems, this is still a high-precision, finite-dimensional approximation to the true infinite dimensional
	problem.  Thus, the \cgne iteration will still converge to a bounded, albeit highly inaccurate, meaningless approximation
	of $x$.
\end{remark}
\section{Conclusions and future work}
\label{section.conclusions}
In this work, we have analyzed the relationship between the \cgne and \cgt iterates, and we have used two one-dimensional
numerical examples to illustrate this behavior and our interpretation of it.  Our interpretation of this analysis is that, in the
Lanczos basis (\ie the iteration basis), \cgt can be expressed as the \cgne iterate with damping factors applied to the
coefficients.  These damping factors can be expressed using quantities that are polynomials of the Tikhonov parameter.  More
generally, we observe that this interpretation gives us an alternative to the more classical filtration of singular directions
approach to understand the mechanisms governing the behavior of the \cgt.  Rather than describing the behavior in terms of the
filtration of the representation of the pseudoinverse solution in terms of the singular vectors of the operator governing the
ill-posed problem, we can instead understand how it filters the \cgne approximation of the pseudoinverse solution expressed in
terms of the Lanczos basis.

This work began as an exploration of the relationship between \cg and \cgt, but our inquiries led us to develop the notion of
Lanczos filtration.  The work thus far is simply proof-of-concept.  There is much more that can be built on this foundation. There
are much more complicated and advanced hybrid iterative approaches for treating ill-posed problems \cite{ChungGazzola:2024:1}, and
iteration basis filtration offers another approach for analyzing these methods and understanding how they work.  Furthermore, we
observe that one can consider methods of iteration basis filtration not defined through the minimization of a Tikhonov-type
functional. We may also consider extending these ideas to hybrid methods for sums of Krylov and non-Krylov subspaces; see \eg
\cite{JiangChungDeSturler:2021:1,RamlauSoodhalterHutterer:2021:1,SoodhalterDeSturlerKilmer:2020:1}.  Lastly, our analysis may offer a different approach to developing parameter-choice rules.
\section*{Acknowledgement}
The second author would like to thank Gerard Meurant for a productive
discussion on this topic and for sharing a chapter of an early draft of his
book \cite{Meurant:2025:1} on Hessenberg matrices.  The second author would
also like to thank Lothar Reichel for recommending the the authors use the
\texttt{chebfun} code from \cite{AlqahtaniMachReichel:2022:1} to build their
experiments.

% Insert the appendix.
\appendix

% Insert the bibliography.
\printbibliography

\end{document}